\newtheorem{theorem}{Утверждение}
\newtheorem{example}{Пример}[section]
\newtheorem{definition}{Определение}
\begin{document}

\title{Transforming optimization problems into a QUBO form: A tutorial}

\author{Alexander M. Semenov}
\affiliation{Russian Quantum Center, Skolkovo, Moscow 121205, Russia}
\affiliation{Cloud Quantum Technologies,  Moscow 123112, Russia}
\author{Sergey R. Usmanov}
\affiliation{Russian Quantum Center, Skolkovo, Moscow 121205, Russia}
\affiliation{Cloud Quantum Technologies,  Moscow 123112, Russia}
\author{Aleksey K. Fedorov}
\affiliation{Russian Quantum Center, Skolkovo, Moscow 121205, Russia}
\affiliation{Cloud Quantum Technologies,  Moscow 123112, Russia}

\begin{abstract}
Practically relevant problems of quadratic optimization often contain multidimensional arrays of variables interconnected by linear constraints, such as equalities and inequalities. 
The values of each variable depend on its specific meaning and can be binary, integer, discrete, and continuous. 
These circumstances make it technically difficult to reduce the original problem statement to the QUBO form.
The paper identifies and considers three main transformations of the original problem statement, namely, the transition from a multidimensional to a one-dimensional array of variables, 
the transition in mixed problems to binary variables, and the inclusion of linear constraints in the objective function in the form of quadratic penalties.
Convenient formulas for calculations are presented and proven, simplifying the implementation of these transformations.
In particular, the formulas for the transition in the problem statement from a multidimensional to a one-dimensional array of variables are based on the use of the Kronecker product of matrices.
The considered transformations are illustrated by numerous examples.
\end{abstract}

\maketitle

\begin{widetext}

\section{Введение}
Сложность многих задач оптимизации, важных с практической точки зрения,  например, задач составления расписаний производственных процессов и маршрутов, является экспоненциальной по времени. 
Современные компьютеры не в состоянии за разумное время найти точное решение таких задач для практически релевантных размерностей.
Одним из возможных путей решения данной проблемы является создание машин, основанных на различных физических принципах~\cite{4}, специально предназначенных для решения задач оптимизации. 
В качестве примера одной из таких машин можно привести оптическую когерентную машину Изинга~\cite{5}. 
Для использования специализированных машин или программных симуляторов их работы задача оптимизации должна быть представлена либо в форме задачи квадратичной бинарной оптимизации без ограничений (QUBO),
\begin{equation}
	\label{nameIfNeededRefInText}
	\frac{1}{2}x^TQx+v^Tx \to \min,\quad Q^T=Q,\quad x_i \in \{0,1\},
\end{equation}
либо в эквивалентной ей форме -- в форме задачи Изинга
\begin{equation}
	-\frac{1}{2}s^TJs-h^Ts \to \min, \quad s_i \in \{-1,1\}.
\end{equation}
Задача Изинга получается после замены переменных \(x=2s-1\) в задаче QUBO. 
Матрицы квадратичных форм эквивалентных задач связаны следующим образом
\begin{equation}
	J_{ij}=-\frac14(1-\delta_{ij})q_{ij}\quad \textrm{и}\quad h_i=-\frac{v_i}{2}-\frac{1}{4}\sum_{j=1}^{n}q_{ij}.
\end{equation}
В случае задачи в форме QUBO, используя равенства $x_i^2=x_i$, линейную часть целевой функции можно внести на диагональ матрицы квадратичной формы. 

Формулировка практических задач оптимизации в форме QUBO и Изинга давно привлекает внимание многих специалистов в различных областях знаний: 
квантовые вычисления~\cite{6,7,8}, 
комбинаторная оптимизация \cite{10,11,13,14}, 
машинное обучение~\cite{15,18,19}, телекоммуникации~\cite{9,17,22}, финансы~\cite{16,20,21} и др. 
На сайте \href{https://blog.xa0.de/post/List-of-QUBO-formulations/}{List of QUBO formulations} 
приведен список из $112$ практически важных задач, сведенных к форме QUBO, и даны ссылки на соответствующие статьи специалистов.

Технические приемы сведения различных задач оптимизации к форме QUBO и Изинга в целом известны~\cite{1,2,3,12}. 
Результатом приведения задачи к форме QUBO или Изинга, как правило, является квадратичная форма от переменных с несколькими индексами. 
Авторы предполагают, что выбор способа упорядочивания переменных и формирования матрицы квадратичной формы будет сделан на этапе программной реализации алгоритма решения задачи. 
С другой стороны, итерационные алгоритмы решения смешанных задач часто требуют формирования матрицы квадратичной формы на каждой итерации. 
Простота и вычислительная эффективность данного действия становится актуальной.   

Основная цель данной работы -- рассмотреть и развить общие технические приемы сведения исходной задачи квадратичной оптимизации к форме QUBO.
Под результатом сведения исходной задачи к форме QUBO мы будем понимать  формулы для вычисления матрицы квадратичной формы и коэффициентов линейной части. 

При сведении практической задачи к форме QUBO можно выделить три основных преобразования постановки задачи: переход от многомерного к одномерному массиву переменных (векторизация); 
переход от целых, дискретных и непрерывных переменных к бинарным (бинаризация) и включение линейных ограничений в целевую функцию в виде квадратичных штрафов.
Преобразование векторизации заключается в упорядочивании многомерных переменных и представлении задачи в форме стандартной задачи квадратичной оптимизации с линейными ограничениями. 
Преобразование бинаризации заключается в построении аффинного отображения из множества 
всех бинарных векторов фиксированной длины в декартово произведение множеств значений исходных переменных и представлении задачи в форме квадратичной бинарной оптимизации с линейными ограничениями.
Преобразование включения линейных ограничений в целевую функцию в виде квадратичных штрафов заключается в представлении задачи в форме QUBO и указании условий на штрафные коэффициенты, 
при которых полученная задача эквивалентна исходной.
Каждое из данных преобразований подробно описано в разделах \ref{p2} -- \ref{p4} и проиллюстрировано многочисленными примерами. 
В параграфе \ref{p22} доказаны утверждения, позволяющие эффективно использовать произведение Кронекера матриц при векторизации практических задач оптимизации. 
В параграфе \ref{p42} получены условия эквивалентности исходной задачи и задачи в форме QUBO.

\section{Преобразование многомерных массивов переменных}\label{p2}

\subsection{Упорядочивание наборов индексов}
В случае переменных с двумя и более индексами квадратичная целевая функция записывается в виде
\begin{equation}
	\sum_{i_1\ldots j_1,i_2\ldots j_2}q_{i_1\ldots j_1,i_2\ldots j_2}x_{i_1\ldots j_1}x_{i_2\ldots j_2}.
\end{equation}
Для записи данной суммы в матричном виде необходимо упорядочить множество наборов индексов. 
Имеется два стандартных способа упорядочивания наборов индексов: в стиле языка C и в стиле языка Fortran. 
В языке C упорядочивание наборов индексов предполагает самое быстрое изменение последнего индекса и самое медленное изменение первого индекса в наборе, а в языке Fortran -- наоборот.

Например, если индекс $i$ в наборе $ij$ принимает значения от 1 до 2, а индекс $j$ -- значения от 1 до 3, то упорядочивание в стиле C и в стиле Fortran имеют вид
$\begin{bmatrix}
1 & 2 & 3\\
4 & 5 & 6
\end{bmatrix}$
и
$\begin{bmatrix}
1 & 3 & 5\\
2 & 4 & 6
\end{bmatrix}$ соответственно. 
В общем случае, если первый индекс в наборе изменяется от 1 до $n$, а второй от 1 до $m$, то данные упорядочивания записываются в виде
\( N(i,j)=m(i-1)+j\) и \(N(i,j)=n(j-1)+i\) соответственно. 
Далее мы будем использовать упорядочивание наборов индексов в стиле языка Fortran. 
Например, для переменных с тремя индексами будем использовать следующее упорядочивание \(N(i,j,k)=nm(k-1)+n(j-1)+i\). 

Упорядоченный в стиле языка Fortran массив переменных $x$, записанный в виде вектора-столбца, будем обозначать через $\bar{x}$.

\subsection{Использование произведения Кронекера матриц при переходе от многомерного к одномерному массиву переменных}\label{p22}

В данном параграфе мы будем часто использовать произведение Кронекера матриц. Напомним определение и основные свойства произведения Кронекера.
\begin{definition}
Произведение Кронекера -- бинарная операция над матрицами произвольного размера, обозначается $\otimes$.
Если $A$ -- матрица размера $m\times n$, $B$ — матрица размера $p\times q$, тогда произведение Кронекера есть блочная матрица размера $mp\times nq$
\begin{equation}
    A\otimes B=
    \begin{pmatrix}
    a_{11}B &\ldots & a_{1n}B\\
    \vdots  & \ddots & \vdots \\
    a_{m1}B & \ldots & a_{mn}B
    \end{pmatrix}.
\end{equation}
Элемент произведения Кронекера \(a_{ij}b_{kl}\) находится в строке с номером \(p(i-1)+k\) и в столбце с номером \(q(j-1)+l\).
\end{definition}
Произведение Кронекера является билинейной ассоциативной операцией и хорошо согласовано с основными операциями над матрицами:
\begin{equation}
    (A\otimes B)^T=A^T\otimes B^T
\end{equation}
\begin{equation}
    (A\otimes B)(C\otimes D)=AC\otimes BD
\end{equation}
\begin{equation}
    (A\otimes B)^{-1}=A^{-1}\otimes B^{-1}
\end{equation}
Приведенные равенства выполняются, если все операции над матрицами определены.

Произведение Кронекера удобно использовать при векторизации задачи.
Следующее утверждение позволяет компактно записать целевую функцию после упорядочивания массива переменных.
\begin{theorem}\label{S1}
Пусть $A,B$ и $C$ -- симметричные матрицы размера $n,m$ и $p$, соответственно. Тогда
\begin{equation}
\sum_{i_1,j_1,i_2,j_2}a_{i_1i_2}b_{j_1j_2}x_{i_1j_1}x_{i_2j_2}=\bar{x}^T(B\otimes A)\bar{x}
\end{equation}
и
\begin{equation}
\sum_{i_1,j_1,k_1,i_2,j_2,k_2}a_{i_1i_2}b_{j_1j_2}c_{k_1k_2}x_{i_1j_1k_1}x_{i_2j_2k_2}=
\bar{x}^T(C\otimes B \otimes A)\bar{x}.
\end{equation}
\end{theorem}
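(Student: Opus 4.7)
The plan is to verify both identities by directly unpacking the quadratic form on the right-hand side, using the explicit entry formula for the Kronecker product recalled in the preceding definition. The Fortran-style ordering assigns to the pair $(i,j)$ the composite index $N(i,j)=n(j-1)+i$, so that $\bar{x}_{N(i,j)}=x_{ij}$ with the first index varying fastest. Correspondingly, in $B\otimes A$ the ``slow'' block index is inherited from $B$ and the ``fast'' within-block index from $A$, which is the structural reason the factors must appear in the order $B\otimes A$ and not $A\otimes B$.

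For the two-index identity, I would first expand
\[
\bar{x}^T(B\otimes A)\bar{x}=\sum_{r,s}\bar{x}_r(B\otimes A)_{rs}\bar{x}_s,
\]
and then reparametrise the summation by writing $r=n(j_1-1)+i_1$ and $s=n(j_2-1)+i_2$, which is a bijection between $\{1,\dots,nm\}\times\{1,\dots,nm\}$ and the index set $\{(i_1,j_1,i_2,j_2)\}$. By the definition of the Kronecker product, $(B\otimes A)_{rs}=b_{j_1j_2}a_{i_1i_2}$, while $\bar{x}_r=x_{i_1j_1}$ and $\bar{x}_s=x_{i_2j_2}$. Collecting these substitutions immediately reproduces the four-fold sum on the left. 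Note that the symmetry of $A$ and $B$ is not actually needed for the identity itself; it is relevant only insofar as it guarantees that $B\otimes A$ is symmetric and thus a legitimate matrix of a quadratic form in the QUBO sense.

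For the three-index identity, the cleanest route is to invoke associativity of the Kronecker product, $C\otimes B\otimes A=C\otimes(B\otimes A)$, and reduce to the two-index case. Group the first two indices into a composite index $(i,j)$ of range $nm$, again ordered Fortran-style; the full Fortran ordering $N(i,j,k)=nm(k-1)+n(j-1)+i$ is precisely the two-level Fortran ordering of the pair $\bigl((i,j),k\bigr)$. Applying the two-index identity with $A$ replaced by $B\otimes A$ (symmetric by the transpose rule listed among the basic properties) and $B$ replaced by $C$ yields the result. A direct expansion using $N(i,j,k)$ and the block-of-blocks structure of $C\otimes B\otimes A$ is a completely equivalent alternative.

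The main obstacle is purely notational: one has to track carefully which factor occupies which position in the Kronecker product, since Fortran-style ordering forces $B\otimes A$ whereas the opposite (C-style) convention would produce $A\otimes B$. Once this alignment of conventions is pinned down, both identities amount to a mechanical substitution, supplemented in the three-index case by a single application of associativity.
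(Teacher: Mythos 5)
Your proof is correct and follows essentially the same route as the paper: both arguments reduce to the observation that, under the Fortran ordering $N(i,j)=n(j-1)+i$, the entry of $B\otimes A$ at position $\bigl(N(i_1,j_1),N(i_2,j_2)\bigr)$ is $b_{j_1j_2}a_{i_1i_2}$ while $\bar{x}_{N(i,j)}=x_{ij}$ — the paper merely packages this bookkeeping through a decomposition into elementary matrices $E_{kl}$ and bilinearity of $\otimes$, whereas you substitute indices directly. Your handling of the three-index case by associativity and grouping $(i,j)$ into a composite Fortran-ordered index is a slightly more explicit version of the paper's ``the proof is analogous for higher dimensions,'' and your remark that symmetry of $A,B$ is not needed for the identity itself is accurate.
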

\begin{proof}
Заметим, что аналогичное утверждение можно сформулировать для массивов переменных
любой размерности. В частности, для одномерного массива
переменных утверждение очевидно
\begin{equation}
    \sum_{i_1,i_2}a_{i_1i_2}x_{i_1}x_{i_2}=x^TAx=\bar{x}^TA\bar{x}.
\end{equation}
Докажем утверждение для массива переменных размерности 2. Для размерностей массива переменных больше 2 доказательство проводится аналогично.
Пусть $E_{kl}$ матрица, у которой только один элемент $e_{kl}=1$, а остальные элементы равны нулю. Размер матрицы определяется диапазонами изменения индексов. В силу определения произведения Кронекера, его билинейности и выбранного способа упорядочивания наборов индексов, получим
\begin{multline}
\bar{x}^T(B\otimes A)\bar{x}=\bar{x}^T\Bigl(\sum_{j_1,j_2}b_{j_1j_2}E_{j_1j_2}\otimes \sum_{i_1,i_2}a_{i_1i_2}E_{i_1i_2}\Bigr )\bar{x} = \ldots \\
= \sum_{i_1,j_1,i_2,j_2}a_{i_1i_2}b_{j_1j_2}\bar{x}^T(E_{j_1j_2}\otimes E_{i_1i_2})\bar{x}=\ldots \\
= \sum_{i_1,j_1,i_2,j_2}a_{i_1i_2}b_{j_1j_2}\bar{x}_{n(j_1-1)+i_1}\bar{x}_{n(j_2-1)+i_2}=\ldots \\
= \sum_{i_1,j_1,i_2,j_2}a_{i_1i_2}b_{j_1j_2}x_{i_1j_1}x_{i_2j_2}.
\end{multline}
Что и требовалось доказать.
\end{proof}
Следующее утверждение позволяет компактно записать линейные ограничения после упорядочивания массива переменных.
\begin{theorem}\label{S2}
Пусть $A,B$ и $C$ -- матрицы размера $l_1\times n,l_2\times m$ и $l_3\times p$, соответственно. Если линейные ограничения задачи с двумерным массивом переменных записываются в виде
\begin{equation}
    \sum_{i,j}a_{ri}b_{sj}x_{ij} = d_{rs} ,\quad r=1,\ldots, l_1,\quad s=1, \ldots, l_2,
\end{equation}
то данный массив ограничений, упорядоченный в стиле языка Fortran, имеет вид
\begin{equation}
    (B\otimes A)\bar{x} = \bar{d}
\end{equation}
и
\begin{equation}
    \|(B\otimes A)\bar{x} - \bar{d}\|^2=\bar{x}^T(B^TB\otimes A^TA)\bar{x}-2((B^T\otimes A^T)\bar{d})^T\bar{x}+\|\bar{d}\|^2.
\end{equation}
Аналогично, если линейные ограничения задачи с трехмерным массивом переменных
записываются в виде
\begin{equation}
    \sum_{i,j,k}a_{ri}b_{sj}c_{tk}x_{ijk} = d_{rst} ,\quad r=1,\ldots,l_1,\quad s=1, \ldots, l_2,\quad 
t=1,\ldots ,l_3,
\end{equation}
то данный массив ограничений, упорядоченный в стиле языка Fortran, имеет вид
\begin{equation}
    (C\otimes B\otimes A)\bar{x} = \bar{d}
\end{equation}
и
\begin{multline}
    \|(C\otimes B\otimes A)\bar{x} - \bar{d}\|^2= \ldots \\ =\bar{x}^T(C^TC\otimes B^TB\otimes A^TA)\bar{x}
-2((C^T\otimes B^T\otimes A^T)\bar d)^T\bar{x}+\|\bar{d}\|^2.
\end{multline}
\end{theorem}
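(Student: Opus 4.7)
The plan is to follow the same strategy as in the preceding statement: use the entry-level description of the Kronecker product together with the Fortran-style ordering to identify the left-hand side with a matrix--vector product, and then invoke the standard identities $(B\otimes A)^T=B^T\otimes A^T$ and $(B\otimes A)(D\otimes C)=BD\otimes AC$ for the norm expansion.

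First I would verify the matrix form of the constraint system. By Fortran ordering, $\bar{x}$ carries the entry $x_{ij}$ at position $N(i,j)=n(j-1)+i$, and $\bar{d}$ carries $d_{rs}$ at position $N(r,s)=l_1(s-1)+r$. From the definition of the Kronecker product, the $(N(r,s),N(i,j))$-entry of $B\otimes A$ equals exactly $b_{sj}a_{ri}$. Consequently the $N(r,s)$-th coordinate of $(B\otimes A)\bar{x}$ is $\sum_{i,j}a_{ri}b_{sj}x_{ij}$, which by hypothesis equals $d_{rs}=\bar{d}_{N(r,s)}$. This establishes $(B\otimes A)\bar{x}=\bar{d}$.

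Next, I would expand the squared norm by writing $\|v\|^2=v^Tv$ with $v=(B\otimes A)\bar{x}-\bar{d}$. This produces three terms: a quadratic term $\bar{x}^T(B\otimes A)^T(B\otimes A)\bar{x}$, a cross term $-2\bar{d}^T(B\otimes A)\bar{x}$ (using that the two symmetric scalar contributions coincide), and the constant $\|\bar{d}\|^2$. The transpose identity turns $(B\otimes A)^T$ into $B^T\otimes A^T$, and the multiplication identity then collapses $(B^T\otimes A^T)(B\otimes A)$ into $B^TB\otimes A^TA$. For the cross term I would rewrite $\bar{d}^T(B\otimes A)\bar{x}=((B^T\otimes A^T)\bar{d})^T\bar{x}$, yielding the stated formula.

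For the three-dimensional case, the same reasoning applies once the ordering rule $N(i,j,k)=nm(k-1)+n(j-1)+i$ is matched with the iterated Kronecker construction via associativity, so that the $(N(r,s,t),N(i,j,k))$-entry of $C\otimes B\otimes A$ equals $c_{tk}b_{sj}a_{ri}$; the norm expansion then proceeds verbatim with a threefold Kronecker product. The main bookkeeping obstacle is keeping the row and column indices of the Kronecker product consistent with the Fortran ordering of $\bar{x}$ and $\bar{d}$; once that identification is made, the rest reduces to the two algebraic identities recalled above.
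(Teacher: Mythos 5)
Your proposal is correct and follows essentially the same route as the paper: identify the Fortran-ordered constraint system with $(B\otimes A)\bar{x}=\bar{d}$ and then expand the squared norm using $(B\otimes A)^T=B^T\otimes A^T$ and the mixed-product rule. The only cosmetic difference is that you read off the entries of $B\otimes A$ directly from the positional formula in the definition, while the paper reaches the same identification by decomposing $A$ and $B$ into elementary matrices $E_{kl}$ and using bilinearity.
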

\begin{proof}
Заметим, что аналогичное утверждение можно сформулировать для массивов переменных
любой размерности. В частности, для одномерного массива, $x=\bar{x}$ и $d=\bar{d}$. Система уравнений
\begin{equation}
    \sum_{i}a_{ri}x_{i} = d_{r} , r=1,\ldots, l_1
\end{equation}
в матричной записи имеет нужный нам вид $A\bar{x} = \bar{d}$
и квадрат нормы разности левой и правой частей системы равен
\begin{multline}
\|A\bar{x} - \bar{d}\|^2=(A\bar{x} - \bar{d})^T(A\bar{x} - \bar{d})=\ldots \\
=(\bar{x}^TA^T - \bar{d}^T)(A\bar{x} - \bar{d})=\bar{x}^T(A^TA)\bar{x}-2(A^T\bar d)^T\bar{x}+\|\bar{d}\|^2.
\end{multline}
Докажем утверждение для массива переменных размерности 2. Для размерностей массива переменных больше 2 доказательство проводится аналогично.
Пусть $E_{kl}$ матрица, у которой только один элемент $e_{kl}=1$, а остальные элементы равны нулю. Размер матрицы определяется диапазонами изменения индексов. Обозначим через $e_k$ вектор-столбец размера $l_1l_2$, у которого  $k$-ая координата равна единице, а остальные координаты равны нулю.  В силу определения произведения Кронекера, его билинейности и выбранного способа упорядочивания наборов индексов, получим
\begin{multline}
(B\otimes A)\bar{x}=\Bigl (\sum_{r,i}a_{ri}E_{ri}\otimes \sum_{s,j}b_{sj}E_{sj}\Bigr )= \ldots \\
= \sum_{r,s}\sum_{i,j}a_{ri}b_{sj}(E_{sj}\otimes E_{ri})\bar{x}=\ldots \\
=\sum_{r,s}\Bigl (\sum_{i,j}a_{ri}b_{sj}\bar{x}_{m(j-1)+i}\Bigr )e_{l_2(s-1)+r}=\ldots \\
=\sum_{r,s}\Bigl (\sum_{i,j}a_{ri}b_{sj}x_{ij}\Bigr )e_{l_2(s-1)+r}.
\end{multline}
Из последнего равенства следует, что исходная система уравнений, упорядоченная в стиле языка Fortran, совпадает с системой уравнений
\((B\otimes A)\bar{x} = \bar{d}.\)

Используя свойства произведения Кронекера, вычислим квадрат нормы разности
\begin{multline}
\|(B\otimes A)\bar{x}-\bar{d}\|^2=((B\otimes A)\bar{x}-\bar{d})^T((B\otimes A)\bar{x}-\bar{d})=\ldots \\
=(\bar{x}^T(B^T\otimes A^T)-\bar{d}^T)((B\otimes A)\bar{x}-\bar{d})=\ldots \\
=\bar{x}^T(B^TB\otimes A^TA)\bar{x}-2((B^T\otimes A^T)\bar d)^T\bar x +\|\bar{d}\|^2.
\end{multline}
Что и требовалось доказать.
\end{proof}

\subsection{Примеры преобразований}
Обозначим через $E_n$  единичную матрицу размера $n$, через $I_{m\times n}$ матрицу размера $m\times n$, все элементы которой
равны единице, и через $\delta_{ij}$ символ Кронекера.

\begin{example}
Возведение линейных комбинаций переменных в квадрат.

Пусть $a$ -- вектор-строка размера $n$ и $b$ -- вектор-строка размера $m$. Тогда, по утверждению \ref{S2} и
свойствам произведения Кронекера
\begin{multline}
\Bigl( \sum_{i,j}a_ib_jx_{ij}\Bigr)^2=((b\otimes a)\bar{x})^T((b\otimes a)\bar{x})=\ldots \\
=\bar{x}^T(b^T\otimes a^T)(b \otimes a)\bar{x}=\bar{x}^T(b^Tb\otimes a^Ta)\bar{x}.
\end{multline}
\end{example}
\begin{example}\label{ex22}

Суммирование по индексам. 
\begin{enumerate}
    \item Пусть левая часть системы уравнений имеет вид суммирования по одному индексу
    \begin{equation}
        \sum_{i=1}^nx_{ij} = b_j,\quad j=1,\ldots ,m.
    \end{equation}
    Заметим, что
    \begin{equation}
        \sum_{i,j}\delta_{sj}x_{ij}=\sum_ix_{is},\quad s=1,\ldots,m.
    \end{equation}
    Тогда, по утверждению \ref{S2}, система данных уравнений после векторизации 
    задачи имеет вид
    \begin{equation}
        (E_m\otimes I_{1\times n})\bar{x} = b
    \end{equation}

    \item Пусть левая часть системы уравнений имеет вид суммирования по двум индексам
    \begin{equation}
        \sum_{i=1}^n\sum_{k=1}^kx_{ijk} = b_j,\quad j=1,\ldots,m.
    \end{equation}
     Заметим, что
     \begin{equation}
         \sum_{i,j,k}\delta_{sj}x_{ijk}=\sum_{i,k}x_{isk},\quad s=1,\ldots,m.
     \end{equation}
    Тогда, по утверждению \ref{S2}, система данных уравнений после векторизации
    задачи имеет вид
    \begin{equation}
        (I_{1\times k}\otimes E_m \otimes I_{1\times n})\bar{x} = b.
    \end{equation}

    \item Пусть квадратичная форма имеет вид
    \begin{equation}
        \sum_{i=1}^n\sum_{j_1,j_2=1}^mx_{ij_1}x_{ij_2}.
    \end{equation}
     Заметим, что
     \begin{equation}
         \sum_i\sum_{j_1,j_2}x_{ij_1}x_{ij_2}=
        \sum_{i_1,i_2}\sum_{j_1,j_2}\delta_{i_1i_2}x_{i_1j_1}x_{i_2j_2}.
     \end{equation}
      Тогда, по утверждению \ref{S1}, данная квадратичная форма после векторизации задачи имеет вид
      \begin{equation}
          \bar x^T(I_{m\times m}\otimes E_n)\bar x.
      \end{equation}
\end{enumerate}
\end{example}
 \begin{example}     
    Пусть левая часть системы уравнений имеет вид кумулятивной суммы
    \begin{equation}
        \sum_{k=1}^t\sum_{j=1}^mx_{ijk} = b_{it},\quad i=1,\ldots ,n, \enskip t=1,\ldots ,p.
    \end{equation}
    Пусть $D$ - матрица размера $p\times p$ с элементами $d_{ij}=1$ при $i\geq j$ и нулю в противном случае. Тогда
    \begin{equation}
        \sum_{k=1}^t\sum_{j=1}^mx_{ijk}=\sum_{j,k}d_{tk}x_{ijk},\quad t=1,\dots,p.
    \end{equation}
     Заметим, что
     \begin{equation}
\sum_{i,j,k}\delta_{ri}d_{tk}x_{ijk}=\sum_{k=1}^t\sum_{j=1}^mx_{rjk},\quad r=1,\ldots,n, \enskip t=1,\ldots,p.
     \end{equation}
    Тогда, по утверждению \ref{S2}, система данных уравнений после векторизации 
    задачи имеет вид
    \begin{equation}
        (D\otimes I_{1\times m} \otimes E_n)\bar{x} = \bar{b}.
    \end{equation}

\end{example}

\begin{example}
 Пусть квадратичная форма имеет вид циклической суммы
 \begin{equation}
      \sum_{i_1,i_2=1}^n\sum_{j=1}^m a_{i_1i_2}x_{i_1j}x_{i_2,j+1},
 \end{equation}
 где $x_{i,m+1}=x_{i1}$. 
  
  Обозначим через $D$ матрицу размера $m\times m$ с элементами $d_{kl}=0.5$ при $|k-l|=1$ или $|k-l|=m-1$ и $d_{kl}=0$ в противном случае.
 Заметим, что
 \begin{equation}
     \sum_{i_1,i_2}\sum_{j=1}^m a_{i_1i_2}x_{i_1j}x_{i_2,j+1}=
 \sum_{i_1,j_1,i_2,j_2}a_{i_1i_2}d_{j_1j_2}x_{i_1j_1}x_{i_2j_2}. 
 \end{equation}
 Тогда, по утверждению \ref{S1}, данная квадратичная форма после векторизации задачи имеет вид
 \begin{equation}
     \bar{x}^T(D\otimes A)\bar{x}.
 \end{equation}
   
\end{example}

\begin{example}
Специальная замена переменных в квадратичной форме 
\begin{equation}
    \sum_{i,j}q_{ij}y_iy_j.
\end{equation}
Пусть \begin{equation}
    y_i=b_i\sum_{k=1}^m a_kx_{ki}, \quad i=1,\ldots ,n.
\end{equation}

Обозначим через $a$ вектор-строку с элементами $a_k$ и через $D_b$ диагональную матрицу с элементами $b_i$ на диагонали.
По утверждению \ref{S1},
\begin{multline}
 \sum_{i,j}q_{ij}\Bigl ( b_i\sum_{k}a_kx_{ki}\Bigr )\Bigl (b_j\sum_{l}a_lx_{lj}\Bigr ) =\ldots \\
 =\sum_{i,j,k,l}(a_ka_l)(b_iq_{ij}b_j)x_{ki}x_{lj}=\bar{x}^T(D_bQD_b\otimes a^Ta)\bar{x}.   
\end{multline}

Подобные замены возникают при переходе к бинарным переменным.
\end{example}

\section{Преобразование множества значений переменных}\label{p3}
Многие практически значимые задачи являются смешанными, т.е. содержат в своей постановке количественные переменные различных типов. Тип количественной переменной определяется множеством значений, которые она может принимать. Ниже мы рассмотрим способы бинаризации множеств значений ограниченных переменных следующих типов:  целочисленные, дискретные и непрерывные.
Требование ограниченности переменных является естественным для практических задач оптимизации. Для непрерывных переменных дополнительно требуется указать абсолютную погрешность $\epsilon >0$  нахождения их значений.

Под бинаризацией множества значений переменных мы будем понимать аффинное отображение
$L\bar{y}+g$ из множества $Y$ бинарных векторов фиксированной длины  во  множество значений $X$ переменных смешанной задачи.
В случае целочисленных и дискретных переменных отображение бинаризации должно быть сюръективным, т.е. любая точка из  $X$ должна являться образом хотя бы одного бинарного вектора из $Y$. Сюръективность нужна, чтобы не потерять оптимальное решение смешанной задачи после перехода к бинарным переменным.
В случае непрерывных переменных образ всех бинарных векторов $Y$ должен обладать следующим свойством: для любого $\bar{x}^1\in X$ найдется $\bar{x}^2=L\bar{y}+g, \bar{y}\in Y$ такой, что 
$\underset{j}{\max}|\bar{x}_j^1-\bar{x}_j^2|\leq \epsilon$ . Данное требование необходимо для нахождения непрерывной части решения с требуемой точностью.

Пусть аффинное отображении бинаризации $\bar{x}=L\bar{y}+g$ построено. Сделаем замену переменных в целевой функции задачи 
\begin{multline}
f(\bar{x})=\frac{1}{2}\bar{x}^TQ\bar{x}+v^T\bar{x}=\frac{1}{2}(L\bar{y}+g)^TQ(L\bar{y}+g)+v^T(L\bar{y}+g)=\ldots \\
=\frac{1}{2}\bar{y}^T(L^TQL)\bar{y}+\frac{1}{2}\bar{y}^T(L^TQ)g+\frac{1}{2}g^T(QL)\bar{y}+ v^TL\bar{y}+f(g).
\end{multline}
Заметим, что в силу симметричности матрицы $Q$, числа $\bar{y}^T(L^TQ)g$  и $g^T(QL)\bar{y}$  равны. Действительно,
\begin{equation}
    (\bar{y}^T(L^TQ)g)^T=g^T(Q^TL)\bar{y}=g^T(QL)\bar{y}.
\end{equation}
 В результате получаем следующую формулу замены переменных в целевой функции
 \begin{equation}
     f(\bar{x})=\frac{1}{2}\bar{y}^T(L^TQL)\bar{y}+(L^T(Qg+v))^T\bar{y}+f(g).
 \end{equation}
\begin{theorem}\label{S3}   
    Пусть отображение бинаризации имеет вид 
    \begin{equation} 
        x_j=b_j\sum_{i=1}^pa_iy_{ij}+g_j,\quad j=1,\ldots , n,
    \end{equation}
   $D_b$ -- диагональная матрица с элементами $b_j$ на диагонали и  $a$ -- вектор-строка с элементами $a_i$. Тогда     
    \begin{equation}
        \bar{x}=L\bar{y}+g=(D_b\otimes a)\bar{y} +g.
    \end{equation}
   Формула замены переменных в целевой функции имеет вид
    \begin{equation}
        f(\bar{x})=\frac{1}{2}\bar{y}^T(D_bQD_b\otimes a^Ta)\bar{y}+(D_b(Qg+v)\otimes a^T)^T\bar{y}+f(g).
    \end{equation}
 \end{theorem}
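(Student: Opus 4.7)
The plan is to split the claim into two independent parts: first verify the matrix identity $\bar{x} = (D_b\otimes a)\bar{y} + g$, and then feed the resulting $L = D_b\otimes a$ into the general substitution formula for $f$ that was derived in the paragraph immediately preceding the theorem.

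For the first part I would compute the $j$-th component of $(D_b\otimes a)\bar{y}$ directly from the definition of the Kronecker product. The $j$-th row of $D_b\otimes a$ has the $1\times p$ block $b_j a$ in block-column $j$ and zeros elsewhere; under the Fortran ordering $N(i,j)=p(j-1)+i$ the coordinates of $\bar{y}$ lying in block $j$ are precisely $y_{1j},\ldots,y_{pj}$. Hence $\bigl((D_b\otimes a)\bar{y}\bigr)_j = b_j\sum_i a_i y_{ij}$, which is exactly the componentwise binarization map. One could equivalently quote the special-substitution example that closed Section~\ref{p22}, since essentially the same Kronecker expansion was carried out there.

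For the second part I would substitute $L=D_b\otimes a$ into the identity
\[
f(\bar{x})=\tfrac{1}{2}\bar{y}^T(L^TQL)\bar{y}+\bigl(L^T(Qg+v)\bigr)^T\bar{y}+f(g)
\]
and simplify using the three Kronecker identities recalled at the start of Section~\ref{p22}. Since $D_b$ is symmetric, $L^T=D_b\otimes a^T$. Viewing the $n\times n$ matrix $Q$ as $Q\otimes (1)$ and applying the mixed-product rule twice gives
\[
L^TQL=(D_b\otimes a^T)(Q\otimes 1)(D_b\otimes a)=D_b Q D_b\otimes a^Ta.
\]
The same trick applied to the $n$-vector $Qg+v$, regarded as $(Qg+v)\otimes(1)$, yields $L^T(Qg+v)=D_b(Qg+v)\otimes a^T$. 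Collecting the two pieces reproduces the formula in the statement.

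The main obstacle is purely bookkeeping rather than conceptual: one must be careful that the rewrite of an $n\times n$ matrix (resp. an $n$-vector) as a Kronecker product with the $1\times 1$ block $(1)$ is formally legitimate, and that the Fortran ordering is applied consistently on both sides when passing between indexed and vectorized notation. Once the identification $L=D_b\otimes a$ is in hand, no step uses anything beyond bilinearity of $\otimes$, the transposition rule $(A\otimes B)^T=A^T\otimes B^T$, and the mixed-product property $(A\otimes B)(C\otimes D)=AC\otimes BD$.
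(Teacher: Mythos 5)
Your proposal is correct and follows essentially the same route as the paper: the identification $L=D_b\otimes a$ (which the paper attributes to Утверждение~\ref{S2}, while you verify it directly from the Kronecker definition — an equally valid and slightly more self-contained choice), followed by substitution into the general change-of-variables formula and simplification via $Q=Q\otimes(1)$ and the mixed-product rule, exactly as in the paper's computation of $L^TQL$ and $L^T(Qg+v)$.
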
   
    \begin{proof}
        Равенство $L=D_b\otimes a$ следует из утверждения \ref{S2}. Используя свойства произведения Кронекера, получим
        \begin{multline}
   L^TQL=(D_b\otimes a)^TQ(D_b\otimes a)= 
  (D_b\otimes a^T)(Q\otimes 1)(D_b\otimes a)= \ldots \\
  =(D_bQ\otimes a^T)(D_b\otimes a)=
   D_bQD_b\otimes a^Ta
        \end{multline}
        и
     \begin{equation}
        L^T(Qg+v)=(D_b\otimes a)^T((Qg+v)\otimes 1)=D_b(Qg+v)\otimes a^T.
     \end{equation}
     Что и требовалось доказать.
    \end{proof}
Вектор $b$ часто называют вектором масштабирования, вектор $a$ -- вектором бинаризации и вектор $g$ -- вектором сдвига.

\subsection{Спиновые переменные}
Пусть $\bar{x}=(x_1,\ldots,x_s)^T$ -- вектор-столбец спиновых переменных $x_j\in \{-1,1\}$.
Aффинное отображение бинаризации имеет простой вид $x=2E_sy-I_{s\times 1}$, где $y$ -- бинарный вектор-столбец размерности $s$. Рассмотрим задачу Изинга
\begin{equation}
f(x)=-\frac12x^TJx-h^Tx\to \min,\quad x\in \{-1,1\}^s.
\end{equation}
Вектор бинаризации $a$ равен 1. В силу утверждения \ref{S3}, получаем следующую постановку задачи Изинга в форме QUBO
\begin{multline}
 f(2E_sy-I_{s\times 1})=\frac12y^T(-4J)y+(2JI_{s\times 1}-2h)^Ty+\ldots \\
 +f(-I_{s\times 1})=\frac12y^TQy+v^Ty+const\to \min,
\end{multline}
где
$q_{ij}=-4J_{ij},  v_i=-2h_i+2\sum_{j=1}^sJ_{ij}$ и $const=-\frac12\sum_{i,j}J_{ij}+\sum_{i=1}^sh_i$.

\subsection{Целочисленные переменные}\label{p31}
Пусть $\bar{x}=(x_1,\ldots,x_s)^T$ -- вектор-столбец целочисленных переменных.
Пусть значения целочисленной переменной $x_j\in [n_j,m_j]$, где $n_j$ и $m_j$ -- целые числа. Можно считать, что $m_j-n_j>1$. Действительно, если $m_j-n_j=1$, то $x_j=n_j+y_j, y_j\in \{0,1\}$ и переменная становится бинарной после сдвига ее значений.
Идея построения аффинного отображения основана на возможности двоичного представления целых чисел. Хорошо известно следующее утверждение. Множество всех бинарных векторов размерности $p$  взаимно однозначно отображается на множество всех целых чисел из отрезка $[0,2^p-1]$ по формуле
\begin{equation}
x=\sum_{i=1}^{p}2^{i-1}y_i, \quad y_i\in\{0,1\}.
\end{equation}
Нам понадобится подобное отображение на отрезок $[0,m_j-n_j]$.
Пусть $p_j$ -- минимальное натуральное число, удовлетворяющее неравенству 
\begin{equation}
m_j-n_j\leq 2^{p_j}-1 \Leftrightarrow p_j\geq \log_2\left (m_j-n_j+1\right).
\end{equation}
Тогда   $p_j=\lceil\log_2(m_j-n_j+1)\rceil$. Здесь $\lceil x\rceil$ -- наименьшее целое число, большее или равное $x$. Из условия $m_j-n_j>1$ следует, что $p_j>1$. В силу определения числа $p_j$, верны неравенства
\begin{equation}
2^{p_j-1}-1<m_j-n_j\leq 2^{p_j}-1
\end{equation}
и все целые числа из отрезка $[0,m_j-n_j]$ , и только они, представляются в виде линейной комбинации $p_j$ бинарных переменных $y_{1j}, \ldots ,y_{p_jj}$ следующим образом
\begin{equation}
\sum_{i=1}^{p_j-1}2^{i-1}y_{ij}+(m_j-n_j-2^{p_j-1}+1)y_{p_jj}.
\end{equation}
Действительно, пусть $k\in[0,m_j-n_j]$. Если $k\leq 2^{p_j-1}-1$, то возможно представление данного числа в виде \(k=\sum_{i=1}^{p_j-1}2^{i-1}y_{ij}\). Если $k>2^{p_j-1}-1$, то $k=k_1+(m_j-n_j-2^{p_j-1}+1)$, где $k_1\leq 2^{p_j-1}-1$ и может быть представлено в виде \(k_1=\sum_{i=1}^{p_j-1}2^{i-1}y_{ij}\). 
Заметим, что каждое число из отрезка 
\begin{equation}
[m_j-n_j-2^{p_j-1}+1,2^{p_j-1}-1]
\end{equation} имеет два различных представления в указанном виде.
Таким образом, отображение бинаризации имеет вид
\begin{equation}
x_j=n_j+\sum_{i=1}^{p_j-1}2^{i-1}y_{ij}+(m_j-n_j-2^{p_j-1}+1)y_{p_jj},\quad j=1,\ldots ,s.
\end{equation}
Построенное отображение является сюръективным, но не является взаимно однозначным, разные бинарные вектора могут представлять одно и тоже целое число. 

Выпишем  матричное представление построенного отображения бинаризации.
Пусть $g$ -- вектор-столбец сдвигов значений переменных $(n_1,...,n_s)^T$.
Обозначим через $P_j$ вектор-строку \((1,2,\ldots ,2^{p_j-2},m_j-n_j-2^{p_j-1}+1 )\) коэффициентов представления
$j$-ой целочисленной переменной в виде линейной комбинации бинарных переменных.
Запишем все бинарные переменные в виде вектора-столбца следующим образом
\begin{equation}
\bar{y}=(y_{11},\ldots,y_{p_11},y_{12},\ldots ,y_{p_22},\ldots ,y_{1s},\ldots ,y_{p_ss})^T.
\end{equation}
Тогда $\bar{x}=L\bar{y}+g$, где матрица $L$ размера $s\times m, m=\sum_{j=1}^sp_j$ имеет следующий вид
\begin{equation}
L=
\begin{pmatrix}
P_1     & 0      &\ldots & 0      \\
0       & P_2    &\ldots & 0      \\
\vdots  & \vdots &\ddots & \vdots \\
0       & 0      &\ldots & P_s
\end{pmatrix}
\end{equation}
Если количество значений $m_j-n_j+1$ для всех целочисленных переменных одинаковое, то все $P_j=P$ и матрица $L=E_s\otimes P$.

\subsection{Дискретные переменные}
Пусть $\bar{x}=(x_1,\ldots,x_s)^T$ -- вектор-столбец дискретных переменных.
Пусть значения дискретной переменной $x_j\in \{a_{1j},\ldots ,a_{p_jj}\}$, где $p_j$  -- натуральное число, $p_j>1$ и все числа $a_{ij}$ попарно различны. 
Определим отображение
\begin{equation}
x_j=\sum_{i=1}^{p_j}a_{ij}y_{ij},\quad y_{ij}\in \{0,1\}.
\end{equation}
Тогда, отображение бинаризации это сужение данного отображение на множество решений системы уравнений
\begin{equation}
\sum_{i=1}^{p_j}y_{ij}=1,\quad j=1,\ldots ,s.
\end{equation}
В машинном обучении подобный способ представления категориальных переменных называется one-hot encoding.

Обозначим через $a_j$ вектор-строку \((a_{1j},\ldots ,a_{p_jj} )\) коэффициентов представления
$j$-ой дискретной  переменной в виде линейной комбинации бинарных переменных.
Запишем все бинарные переменные в виде вектора-столбца следующим образом
\begin{equation}
\bar{y}=(y_{11},\ldots,y_{p_11},y_{12},\ldots ,y_{p_22},\ldots ,y_{1s},\ldots ,y_{p_ss})^T.
\end{equation}
Тогда $\bar{x}=L\bar{y}$, при условии $A\bar y=I_{s\times 1}$. Здесь матрицы $L$ и $A$ размера $s\times m, m=\sum_{j=1}^sp_j$ имеют следующий вид
\begin{equation}
L=\begin{pmatrix}
a_1     & 0      &\ldots & 0      \\
0       & a_2    &\ldots & 0      \\
\vdots  & \vdots &\ddots & \vdots \\
0       & 0      &\ldots & a_s
\end{pmatrix}
\quad \textrm{и} \quad 
A=\begin{pmatrix}
I_{1\times p_1}     & 0      &\ldots & 0      \\
0       & I_{1\times p_2}    &\ldots & 0      \\
\vdots  & \vdots &\ddots & \vdots \\
0       & 0      &\ldots & I_{1\times p_s}
\end{pmatrix}.
\end{equation}
Если множества значений всех дискретных переменных совпадают, то все $a_j=a$ и все $p_j=p$, матрица $L=E_s\otimes a$, а дополнительные ограничения запишутся в виде 
$(E_s\otimes I_{1\times p})\bar y=I_{s\times 1}$.

\subsection{Непрерывные переменные}
Пусть $\bar{x}=(x_1,\ldots,x_s)^T$ -- вектор-столбец непрерывных переменных.
Пусть значения непрерывной переменной $x_j\in [a_j,b_j]$, где $a_j$ и $b_j$ -- вещественные числа, $b_j >a_j$ и $d_j=b_j-a_j$.
Зададим абсолютную погрешность $\epsilon >0$  для поиска приближенных значений непрерывных переменных. Отметим, что в практических задачах выбор точности приближенного решения зависит от выбранных единиц измерения реальных величин.
Выберем минимальное натуральное число $p_j$ так, что
\begin{equation}
\frac{d_j}{2^{p_j}-1}\leq 2\epsilon \Leftrightarrow p_j\geq \log_2\left (\frac{d_j}{2\epsilon}+1\right ).
\end{equation}
Тогда  $p_j=\lceil \log_2\left (\frac{d_j}{2\epsilon}+1\right )\rceil$.
Возьмем на отрезке $[a_j,b_j]$ сетку из $2^{p_j}$ точек с равноотстоящими узлами
\begin{equation}
x_{kj}=a_j+\frac{d_j}{2^{p_j}-1}k,\quad k=0, \ldots ,2^{p_j}-1.
\end{equation}
Заметим, что для любого числа $a\in [a_j,b_j]$ найдется точка $x_{kj}$ такая, что $|x_{kj}-a|\leq\epsilon$.
Представляя числа $k\in [0,2^{p_j}-1]$ в двоичном виде $\sum_{i=1}^{p_j}2^{i-1}y_{ij}$, получим отображение бинаризации
\begin{equation}
x_j=a_j+\frac{d_j}{2^{p_j}-1}\sum_{i=1}^{p_j}2^{i-1}y_{ij},\quad y_{ij}\in \{0,1\}.
\end{equation}
Пусть  $P_j=(1,2,\ldots ,2^{p_j-1})$. В матричном виде отображение записывается в виде
\begin{equation}
\bar{x}=L\bar{y}+a,
\end{equation}
где $a=(a_1,\ldots ,a_s)^T$ ,  $\bar{y}=(y_{11},\ldots,y_{p_11},y_{12},\ldots ,y_{p_22},\ldots ,y_{1s},\ldots ,y_{p_ss})^T$ и
\begin{equation}
L=
\begin{pmatrix}
\frac{d_1}{2^{p_1}-1}P_1     & 0      &\ldots & 0      \\
0       & \frac{d_2}{2^{p_2}-1}P_2    &\ldots & 0      \\
\vdots  & \vdots &\ddots & \vdots \\
0       & 0      &\ldots & \frac{d_s}{2^{p_s}-1}P_s
\end{pmatrix}.
\end{equation}
Пусть $D$ -- диагональная матрица размера $s$ с элементами $\frac{d_j}{2^{p_j}-1}$ на диагонали. Если все $p_j$ одинаковые, то все $P_j=P$ и матрица $L=D\otimes P$.

При большой требуемой точности приближенного решения количество переменных многократно увеличивается. Решением проблемы может быть применение для решения задачи итерационного алгоритма, на каждой итерации которого область поиска уменьшается.

\subsection{Примеры преобразований}
В данном параграфе мы рассмотрим задачу
\begin{equation}\label{eq0}
\frac12x^TQx+v^Tx\to \min     
\end{equation}
с различными множествами значений переменных.
\begin{example}
    Пусть в задаче (\ref{eq0}) целочисленные переменные могут принимать только три значения: $-1, 0$ и $1$. 

    Тогда $p=\lceil\log_23\rceil=\lceil 1.585\rceil=2$, вектор бинаризации $P=(1,1)$, размерность бинарного вектора $\bar y$ равна $2n$, где $n$ -- размерность исходной задачи. Вектор сдвига $g=I_{n\times 1}$.
    Отображение бинаризации имеет вид $\bar{x}=(E_n\otimes P)\bar{y}+I_{n\times 1}$. По утверждению \ref{S3}, формула замены переменных в целевой функции записывается в виде
\begin{equation}
f(\bar{x})= \frac{1}{2}\bar{y}^T(Q\otimes P^TP)\bar{y}+((QI_{n\times 1}+v)\otimes P^T)^T\bar{y} + f(g),
\end{equation}
где $f(g)=\frac12\sum_{i,j}q_{ij}+\sum_iv_i$.
\end{example}
\begin{example}
Пусть в задаче (\ref{eq0}) целочисленные переменные  принимают значения от $0$ до $300$. 

Тогда $p=\lceil log_2301\rceil=\lceil 8.2336\rceil=9$, $P=(1,2,\ldots ,128,45)$, размерность бинарного вектора $\bar y$ равна $9n$, где $n$ -- размерность исходной задачи.
Отображение бинаризации имеет вид $\bar{x}=(E_n\otimes P)\bar{y}$. По утверждению \ref{S3}, формула замены переменных в целевой функции
\begin{equation}
f(\bar{x})= \frac{1}{2}\bar{y}^T(Q\otimes P^TP)\bar{y}+(v\otimes P^T)^T\bar{y}.
\end{equation}
\begin{example}
  Пусть в задаче (\ref{eq0}) дискретные переменные могут принимать только три значения: $0, 1$ и $4$. 

    Тогда $p=3$, вектор бинаризации $a=(0,1,4)$, размерность бинарного вектора $\bar y$ равна $3n$, где $n$ -- размерность исходной задачи. 
    Отображение бинаризации имеет вид $\bar{x}=(E_n\otimes a)\bar{y}$.  Дополнительные ограничения имеют вид $(E_n\otimes I_{1\times 3})\bar y=I_{n\times 1}$. По утверждению \ref{S3}, формула замены переменных в целевой функции записывается в виде
\begin{equation} 
f(\bar{x})= \frac{1}{2}\bar{y}^T(Q\otimes a^Ta)\bar{y}+(v\otimes a^T)^T\bar{y}.
\end{equation}
\end{example}
\begin{example}
    Пусть в задаче (\ref{eq0}) непрерывные переменные  принимают значения от $0$ до $100$  и требуемая точность нахождения значений переменных $\epsilon =0.01$. 

Тогда $d=100$, 
\begin{equation}
p=\Bigl \lceil log_2\left (\frac{d}{2\epsilon}+1\right)\Bigr\rceil=
\lceil\log_2(5001)\rceil=\lceil 12.288\rceil=13,
\end{equation}
$P=(1,2,\ldots ,2^{11},2^{12})$, размерность бинарного вектора $\bar y$ равна $13n$, где $n$ -- размерность исходной задачи.
Отображение бинаризации имеет вид $\bar{x}=(\frac{d}{2^p-1}E_n\otimes P)\bar{y}$. По утверждению \ref{S3}, формула замены переменных в целевой функции записывается в виде
\begin{equation}
f(\bar{x})= \frac{1}{2}\bar{y}^T\left(\left(\frac{d}{2^p-1}\right )^2Q\otimes P^TP\right)\bar{y}+(v\otimes P^T)^T\bar{y}.
\end{equation}
\end{example}
\end{example}
\section{Включение линейных ограничений в целевую функцию}\label{p4}
После перехода к одномерному массиву бинарных переменных общая постановка исходной задачи имеет вид
\begin{equation}\label{eq1}
\frac{1}{2}x^TQx+v^Tx\to \min, 
\end{equation}
\begin{equation}\label{eq2}
Ax=b, 
\end{equation}   
\begin{equation}\label{eq3}
Cx\leq d, 
\end{equation}
\begin{equation*}
x_i\in \{0,1\},\quad i=1,\ldots,n, 
\end{equation*}
где $A$ и $C$ -- матрицы размера $m\times n$ и $l\times n$, соответственно.
Мы будем считать, что элементы матриц $A$ и $B$, а также векторов $b$ и $d$ являются целыми числами. Дело в том, что  во многих практических задачах коэффициенты равенств и правые части либо целые, либо заданы с некоторой точностью. Например, с точностью до $k$-го знака после запятой и, после умножения равенств на $10^k$, их коэффициенты и правые части становятся целыми числами.
В данном параграфе рассмотрено сведение задачи (\ref {eq1}) -- (\ref{eq3}) к задаче в форме QUBO.

\subsection{Предварительный анализ ограничений}
Некоторые из уравнений (\ref{eq2}) могут не иметь решения. Действительно, рассмотрим $i$-ое равенство 
\(\sum_ja_{ij}x_j=b_i\). Пусть \(a_i^+\) -- сумма положительных коэффициентов, $a_i^-$ -- сумма отрицательных коэффициентов $a_{ij}$. Если $a_i^+<b_i$, либо $a_i^->b_i$, то равенство никогда не выполняется и задача не имеет решения.

Аналогично, некоторые из неравенств могут либо выполняться всегда, либо не иметь решений. Рассмотрим $i$-ое неравенство $\sum_jc_{ij}x_j\leq d_i$. Пусть $c_i^+$ -- сумма положительных коэффициентов и $c_i^-$ -- сумма отрицательных коэффициентов $c_{ij}$. Если $c_i^+<d_i$, то неравенство выполняется всегда и его можно исключить из постановки задачи. Если $c_i^->d_i$, то неравенство никогда не выполняется и задача не имеет решения.

Заметим, что при решении практических задач имеет смысл предварительно решить задачу
\[\frac{1}{2}\|Ax-b\|^2\to \min.\]
Если оптимальное значение целевой функции будет положительным, то исходная задача не имеет решения, так как не существует решений у системы уравнений (\ref{eq2}).

\subsection{Включение равенств в целевую функцию}
\label{p42}
Рассмотрим сначала простейший вид ограничений (\ref{eq2}). Пусть часть бинарных переменных имеют фиксированные
значения $x(ind)=b$, где $ind$ -- индекс переменных с фиксированными значениями. Введем обозначение для переменных с неизвестными значениями $y=x(\overline{ind})$. 
Легко проверяется, что исходная целевая функция (\ref{eq1}) может быть записана в виде
\begin{equation}\label{eq31}
\frac12y^TQ_yy+(Q_{y,b}b+v_y)^Ty+\frac12b^TQ_bb+v_b^Tb,
\end{equation}
где
\[Q_y=Q(\overline{ind},\overline{ind}),\quad Q_{y,b}=Q(\overline{ind},ind), \quad Q_b=Q(ind,ind),\]
\[v_y=v(\overline{ind}),\quad v_b=v(ind).\]
Представление целевой функции в виде (\ref{eq31}) удобно применять в алгоритмах локального поиска оптимального решения для задач большой размерности. Простейший алгоритм заключается в следующем. На каждой итерации, случайно или по определенному правилу, фиксируются значения большей части переменных и минимизируется целевая функция (\ref{eq31}). В случае уменьшения значения целевой, обновляется текущее решение.

В общем случае, способ включения равенств (\ref{eq2}) в целевую функцию хорошо известен.
В целевую функцию, в качестве слагаемого,  добавляется штрафная функция $\frac{\rho}{2}\|Ax-b\|^2=\rho g(x)$, где $\rho >0$ -- штрафной коэффициент.  Выполняя преобразования, получим
\begin{equation}
g(x)=\frac{1}{2}(Ax-b)^T(Ax-b)=\frac{1}{2}x^T(A^TA)x-(A^Tb)^Tx+\frac{1}{2}\|b\|^2
\end{equation}
и QUBO постановка задачи (\ref{eq1}), (\ref{eq2}), запишется в виде
\begin{equation}
\label{eq4}
\frac{1}{2}x^T(Q+\rho A^TA)x+(v-\rho A^Tb)^Tx +\frac{\rho}{2}\|b\|^2\to \min,
\end{equation}
\[x_i\in \{0,1\}, i=1,\ldots,n.\]
Покажем эквивалентность задач (\ref{eq4}) и (\ref{eq1}), (\ref{eq2}). 

\begin{theorem}\label{S4}
Пусть существует оптимальное решение задачи (\ref{eq1}), (\ref{eq2}). 
Тогда, существует значение $\rho_0>0$ такое, что для всех $\rho\geq \rho_0$ задачи (\ref{eq4})  и (\ref{eq1}), (\ref{eq2}) эквивалентны.
Если элементы $A$ и $b$ целые числа, то справедлива оценка
\begin{equation}
\rho_0\leq \sum_{i,j}|q_{ij}|+2\sum_i|v_i|+2.
\end{equation}
\end{theorem}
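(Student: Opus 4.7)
На первом шаге обозначим через $F_\rho(x) = f(x) + \rho g(x)$ целевую функцию задачи (\ref{eq4}), где $f(x) = \tfrac12 x^T Q x + v^T x$ и $g(x) = \tfrac12\|Ax-b\|^2$. Пусть $X^* = \{x\in\{0,1\}^n : Ax = b\}$ -- допустимое множество задачи (\ref{eq1}), (\ref{eq2}), и $x^*\in X^*$ -- её оптимальное решение со значением $f^* = f(x^*)$. На $X^*$ функция $g$ обращается в нуль, поэтому $F_\rho$ и $f$ совпадают на $X^*$ и, в частности, $F_\rho(x^*) = f^*$. Эквивалентность сводится к тому, чтобы при достаточно большом $\rho$ значение $F_\rho(x)$ было строго больше $f^*$ для всех $x\in\{0,1\}^n\setminus X^*$: тогда любой минимум $F_\rho$ на кубе достигается только в точках $X^*$ и внутри $X^*$ совпадает с минимумом $f$.

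Для оценки разности $F_\rho(x) - f^*$ в точках $x\in\{0,1\}^n\setminus X^*$ я бы использовал две независимые оценки. Первая опирается на целочисленность $A$ и $b$: если $x\in\{0,1\}^n$ и $Ax\neq b$, то $Ax-b$ есть ненулевой целочисленный вектор, следовательно $\|Ax-b\|^2\geq 1$ и $\rho g(x) \geq \rho/2$. Вторая, основная по сложности, оценивает колебание $f$ на гиперкубе. Для любых $x,y\in\{0,1\}^n$ верно
\begin{equation}
f(x) - f(y) = \tfrac12\sum_{i,j}q_{ij}(x_ix_j - y_iy_j) + \sum_i v_i(x_i - y_i),
\end{equation}
а так как $|x_ix_j - y_iy_j|\leq 1$ и $|x_i - y_i|\leq 1$ на гиперкубе, имеем
\begin{equation}
|f(x) - f(y)| \leq \tfrac12\sum_{i,j}|q_{ij}| + \sum_i|v_i| = M.
\end{equation}
Подставляя $y = x^*$, получаем $f^* - f(x)\leq M$ для всех $x\in\{0,1\}^n$.

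Объединение двух оценок даёт для $x\notin X^*$
\begin{equation}
F_\rho(x) - f^* = (f(x) - f^*) + \rho g(x) \geq -M + \rho/2,
\end{equation}
причём правая часть строго положительна при $\rho > 2M$. Взяв $\rho_0 = 2M + 2 = \sum_{i,j}|q_{ij}| + 2\sum_i|v_i| + 2$, получим, что при $\rho\geq\rho_0$ выполнено $F_\rho(x) > f^*$ на $\{0,1\}^n\setminus X^*$, а значит задачи (\ref{eq4}) и (\ref{eq1}), (\ref{eq2}) эквивалентны. Основным техническим моментом здесь является именно оценка колебания $f$ через разность $f(x) - f(y)$, а не через раздельные оценки $|f(x)|\leq M$ и $|f^*|\leq M$: наивный подход дал бы $f^* - f(x)\leq 2M$ и, соответственно, вдвое более слабую оценку $\rho_0\leq 4M + 2$; использование разностной формы квадратичной части позволяет сэкономить множитель 2 и получить заявленную в утверждении оценку.
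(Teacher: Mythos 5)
Ваше доказательство корректно и по существу совпадает с доказательством в статье: оба аргумента опираются на те же две оценки --- колебание $f$ на гиперкубе не превосходит $M=\tfrac12\sum_{i,j}|q_{ij}|+\sum_i|v_i|$ (именно через разность $x_ix_j-y_iy_j$), а штраф в недопустимой точке при целочисленных $A,b$ не меньше $\rho/2$, --- что даёт ту же границу $\rho_0\leq 2M+2$. Единственное мелкое замечание: первая часть утверждения (существование $\rho_0$) сформулирована без предположения целочисленности, и для неё достаточно заменить $1/2$ на минимальное положительное значение $\tfrac12\|Ax-b\|^2$ по конечному гиперкубу (в статье это величина $y''_2$); у вас этот общий случай явно не проговорён, но восстанавливается дословно.
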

\begin{proof}
  Достаточно доказать, что  существует значение $\rho_0>0$ такое, что для всех $\rho\geq \rho_0$   оптимальное решение задачи (\ref{eq4}) является решением системы уравнений (\ref{eq2}).
  
Пусть $E_1=\{y'_1,\ldots,y'_r\}$ и $E_2=\{y''_1,\ldots,y''_s\}$-- упорядоченные по возрастанию множества значений функций $\frac{1}{2}x^TQx+v^Tx$  и  $\frac{1}{2}\|Ax-b\|^2$, соответственно.  Для целевой функции задачи (\ref{eq4}) введем обозначение
\begin{equation}
g(x,\rho)=\frac{1}{2}x^TQx+v^Tx+\frac{\rho}{2}\|Ax-b\|^2.
\end{equation}
  Так как оптимальное решение задачи (\ref{eq1}), (\ref{eq2}) существует, то $y''_1=0$ и $y''_2>0$. 
  Возьмем 
  \begin{equation}
  \rho_0=\frac{y'_r-y'_1+1}{y''_2}\quad \textrm{и} \quad \rho\geq \rho_0.
  \end{equation}
  Пусть $\hat x$ -- оптимальное решение задачи (\ref{eq4}). Докажем, что  $A\hat x=b$. Действительно, предположим $A\hat x\ne b$  и пусть $Ax'=b$. Тогда 
   \begin{equation}
   g(x',\rho)-g(\hat x,\rho)\leq y'_r-y'_1-\rho y''_2\leq y'_r-y'_1-\rho_0 y''_2=-1.
   \end{equation}
Получили противоречие с оптимальностью решения $\hat x$.  Поэтому, $A\hat x=b$.
Оценим сверху число $\rho_0$. Заметим, что
\begin{multline}
y'_r-y'_1=\underset{x',x''}{\max}\Bigl |\frac12\sum_{i,j}q_{ij}(x'_ix'_j-x''_ix''_j)+\sum_iv_i(x'_i-x''_i)\Bigr |\leq \ldots \\
\leq \frac{1}{2}\sum_{i,j}|q_{ij}|+\sum_i|v_i|.
\end{multline}
Так как у матрицы $A$ и вектора $b$ все элементы целые числа, то все элементы вектора $Ax-b$ являются целыми числами. Поэтому, $y''_2\geq \frac12$. Получаем следующую оценку
\begin{equation}
\rho_0=\frac{y'_r-y'_1+1}{y''_2}\leq \sum_{i,j}|q_{ij}|+2\sum_i|v_i|+2.
\end{equation}
Утверждение доказано.
\end{proof}

\subsection{Включение неравенств в целевую функцию}\label{p43}
С помощью вспомогательного неотрицательного целочисленного вектора-столбца $z$ размера $l$, запишем систему неравенств (\ref{eq3}) в виде системы уравнений $Cx+z=d$.  Данная система уравнений определяет множества значений координат вектора $z$. Действительно, $z_i\in [0,d_i-c_i^-]$. 
Построение отображения бинаризации для целочисленных переменных подробно описано в параграфе \ref{p31}. Напомним, что сначала определяются минимальные натуральные числа $p_i$, для которых выполняются неравенства $d_i-c_i^-\leq 2^{p_i}-1$. Затем определяются вектора бинаризации 
\begin{equation}
P_i=(1,2,\ldots,2^{p_i-2},d_i-c_i^--2^{p_i-1}+1)
\end{equation}
и отображение бинаризации $z=L\bar y$, где $\bar y$ -- бинарный вектор-столбец размерности $m=\sum_ip_i$ и 
\begin{equation}
L=
\begin{pmatrix}
P_1     & 0      &\ldots & 0      \\
0       & P_2    &\ldots & 0      \\
\vdots  & \vdots &\ddots & \vdots \\
0       & 0      &\ldots & P_l
\end{pmatrix}.
\end{equation}
В целевую функцию, в качестве слагаемого,  добавляется штрафная функция $\frac{\rho}{2}\|Cx+L\bar y-d\|^2=\rho g(x,\bar y)$, 
где $\rho >0$ -- штрафной коэффициент.
Сформулируем  простое полезное утверждение.
\begin{theorem}\label{S6}
    Пусть вектор-столбец переменных $x=[y;z]$, т.е. все переменные разбиты на две группы переменных $y$ и $z$. Тогда
    \[\frac12\|Ay+Bz-d\|^2=\frac12 x^TSx+R^Tx+\frac12 \|d\|^2,\]
    где
    \[S=\begin{pmatrix}
        A^TA&A^TB\\
        B^TA&B^TB
    \end{pmatrix}
    \quad \textrm{и} \quad
    R=\begin{pmatrix}
        -A^Td\\
        -B^Td
    \end{pmatrix}.\]
\end{theorem}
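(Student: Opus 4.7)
Основная идея состоит в том, чтобы свести общий случай к уже разобранному одномерному случаю утверждения \ref{S2} с помощью блочной конкатенации матриц. Первым шагом я бы ввёл блочную матрицу $M = \begin{pmatrix} A & B \end{pmatrix}$, полученную горизонтальной конкатенацией. Тогда по определению вектора $x = [y; z]$
\[
Ay + Bz = M \begin{pmatrix} y \\ z \end{pmatrix} = Mx,
\]
и штрафное слагаемое принимает вид $\frac{1}{2}\|Mx - d\|^2$.

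Вторым шагом я бы применил к $\|Mx - d\|^2$ уже доказанную в рамках утверждения \ref{S2} (одномерный случай) формулу квадрата нормы разности:
\[
\|Mx - d\|^2 = x^T(M^TM)x - 2(M^Td)^T x + \|d\|^2.
\]
Далее остаётся пересчитать блоки по правилу блочного умножения:
\[
M^TM = \begin{pmatrix} A^T \\ B^T \end{pmatrix} \begin{pmatrix} A & B \end{pmatrix} = \begin{pmatrix} A^TA & A^TB \\ B^TA & B^TB \end{pmatrix} = S,
\]
\[
M^Td = \begin{pmatrix} A^Td \\ B^Td \end{pmatrix} = -R.
\]
Подстановка этих выражений и деление на $2$ сразу дают требуемое равенство.

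Содержательных препятствий в этом доказательстве нет: утверждение чисто алгебраическое и сводится к стандартной выкладке с блочными матрицами. Единственное место, где нужна аккуратность, — это согласование упорядочивания координат в $x = [y; z]$ с блочной структурой матриц $S$ и $R$, чтобы блоки $A^TA$, $A^TB$, $B^TA$, $B^TB$ располагались в позициях, соответствующих разбиению $x$ на подвекторы $y$ и $z$; это проверяется непосредственно по правилам блочного умножения.
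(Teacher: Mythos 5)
Ваше доказательство верно и по существу совпадает с доказательством в статье: там то же тождество получается прямым раскрытием скобок в $(Ay+Bz-d)^T(Ay+Bz-d)$ и сопоставлением слагаемых с блоками $S$ и $R$. Ваша упаковка через конкатенацию $M=\begin{pmatrix}A & B\end{pmatrix}$ и ссылку на уже выведенную формулу $\|Mx-d\|^2=x^T(M^TM)x-2(M^Td)^Tx+\|d\|^2$ --- лишь более компактная запись той же выкладки, и она корректна.
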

\begin{proof}
    Выполняя преобразования, получим
\begin{multline}
    \frac12\|Ay+Bz-d\|^2=\frac12(Ay+Bz-d)^T(Ay+Bz-d)=\ldots \\ 
    =\frac12y^T(A^TA)y+y^T(A^TB)z +\frac12z^T(B^TB)z - \ldots\\
    -(A^Td)^Ty-(B^Td)^Tz +\frac12\|d\|^2.
\end{multline}
Из полученного равенства следует, что квадратичная форма и линейная часть имеют требуемый вид.
\end{proof}

Обозначим через $z$ вектор-столбец исходных и вспомогательных переменных $(x;\bar y)$. Тогда,
в силу утверждения \ref{S6}, QUBO постановку задачи (\ref{eq1}), (\ref{eq3}), без учета константы, можно записать в виде 
\begin{equation}
\frac12 z^TSz+R^Tz\to \min,
\end{equation}
\begin{equation}
\label{eq5}
S=
\begin{pmatrix}
Q+\rho C^TC     & \rho C^TL \\
\rho L^TC     & \rho L^TL 
\end{pmatrix},
\quad R=
\begin{pmatrix}
v-\rho C^Td\\
-\rho L^Td  
\end{pmatrix},
\end{equation}

\[z=(x;\bar y),\quad x_i\in \{0,1\},\enskip i=1,\ldots,n,\quad \bar y_k\in \{0,1\},\enskip k=1,\ldots,m.\]

Отметим, что при включении в целевую функцию некоторых простых неравенств можно не вводить вспомогательные переменные и, тем самым, не увеличивать размерность задачи в форме QUBO.  Множество решений каждого такого неравенства совпадает с множеством, на котором обращается в ноль неотрицательная квадратичная функция \cite{3}. Например, множество решений неравенства $\sum_ix_i\leq 1$ совпадает с множеством, на котором обращается в ноль квадратичная функция $\sum_{i\ne j}x_ix_j$. Чтобы учесть данное неравенство в целевой функции, достаточно добавить в целевую функцию слагаемое $\rho \sum_{i\ne j}x_ix_j, \rho >0$. 
Общий подход к выявлению таких "хороших" неравенств заключается в непосредственной проверке совпадения множества решений неравенства и множества нулей неотрицательной квадратичной функции. В качестве иллюстрации данного подхода докажем полезное утверждение.
\begin{theorem}\label{S5}
Пусть $x_i\in \{0,1\},i=1,\ldots, n$ и $y\in \{0,1\}$. Тогда 
\begin{equation}
\sum_{i=1}^nx_i\leq y  \Leftrightarrow 
\left\{
\begin{aligned}
\sum_{i\ne j}x_ix_j&=&0,\\
(1-y)\sum_{i=1}^nx_i & = &0.\\
\end{aligned}
\right.
\end{equation}
\end{theorem}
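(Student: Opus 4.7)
План доказательства состоит в аккуратном разборе случаев, основанном на том, что все переменные принимают только значения $0$ или $1$, а потому сумма $\sum_i x_i$ есть целое неотрицательное число. Ключевое наблюдение, которое я выделил бы в самом начале: при $x_i\in\{0,1\}$ равенство $\sum_{i\ne j}x_ix_j=0$ равносильно тому, что среди $x_1,\ldots,x_n$ не более одной переменной равна единице. Действительно, если бы нашлись два различных индекса $i_0\ne j_0$ с $x_{i_0}=x_{j_0}=1$, то слагаемое $x_{i_0}x_{j_0}=1$ давало бы ненулевой вклад в сумму неотрицательных слагаемых. Следовательно, из первого равенства правой части сразу получается $\sum_i x_i\in\{0,1\}$.

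Далее я бы отдельно рассмотрел прямую импликацию ($\Rightarrow$). Пусть $\sum_i x_i\leq y$. Так как $y\leq 1$, имеем $\sum_i x_i\leq 1$, откуда по бинарности не более одной переменной $x_i$ равна единице, что даёт $\sum_{i\ne j}x_ix_j=0$. Для второго равенства различаем случаи $y=1$ (тогда множитель $1-y$ обнуляется) и $y=0$ (тогда $\sum_i x_i\leq 0$ и, в силу неотрицательности, $\sum_i x_i=0$). В обоих случаях $(1-y)\sum_i x_i=0$.

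Для обратной импликации ($\Leftarrow$) использую упомянутое наблюдение: из первого равенства следует $\sum_i x_i\in\{0,1\}$. Если $\sum_i x_i=0$, то неравенство $0\leq y$ выполнено тривиально в силу $y\in\{0,1\}$. Если же $\sum_i x_i=1$, то второе равенство принимает вид $(1-y)\cdot 1=0$, откуда $y=1$, и требуемое $\sum_i x_i=1\leq 1=y$ также выполнено.

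Существенного технического препятствия я не предвижу: всё доказательство сводится к указанному разбору четырёх-пяти случаев и к обоснованию ключевого факта про $\sum_{i\ne j}x_ix_j$. Единственный тонкий момент, на который стоит обратить внимание в изложении, -- это строгое обоснование того, что из $\sum_i x_i\leq 0$ при $x_i\in\{0,1\}$ следует равенство нулю всех $x_i$, то есть явное использование неотрицательности бинарных значений. Всё остальное является прямым вычислением.
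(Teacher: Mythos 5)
Ваше доказательство корректно и по существу совпадает с доказательством из статьи: оба сводятся к разбору случаев по значению $y$ (или, эквивалентно, по значению $\sum_i x_i$) с опорой на то, что равенство $\sum_{i\ne j}x_ix_j=0$ для бинарных переменных равносильно условию $\sum_i x_i\leq 1$. Ваше изложение лишь чуть подробнее проговаривает это ключевое наблюдение, которое в статье упомянуто до формулировки утверждения.
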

\begin{proof}
    Пусть $\sum_{i=1}^nx_i\leq y$. Если $y=0$, то $x=0$ и оба равенства верны. Если $y=1$, то второе равенство очевидно, а первое равенство, как мы уже отметили выше, тоже верно.
    
    С другой стороны, пусть оба равенства верны. Если $y=0$, то из второго равенства следует, что $x=0$ и неравенство выполняется. Если $y=1$, то из первого равенства следует, что $\sum_{i=1}^nx_i\leq y$. 
\end{proof}
В силу утверждения \ref{S5}, в качестве квадратичного штрафа для данного неравенства можно взять функцию 
\begin{equation}
\rho_1\sum_{i\ne j}x_ix_j +\rho_2(1-y)\sum_{i=1}^nx_i,\quad \rho_1,\rho_2 >0.
\end{equation}

\subsection{Примеры}
\begin{example}
Фиксированное количество ненулевых бинарных переменных.

Постановка задачи:
\begin{equation}
\frac12 x^TQx+v^Tx\to \min,
\end{equation}
\begin{equation}
\sum_{i=1}^nx_i=k,\quad 0<k<n.
\end{equation}
По общей формуле (\ref{eq4}) получим следующую QUBO постановку задачи
\begin{equation}
\frac{1}{2}x^T(Q+\rho I_{n\times n})x+(v-\rho kI_{n\times 1})^Tx +\frac{\rho k^2}{2}\to \min,
\end{equation}
где $\rho >0$ -- штрафной коэффициент. 
\end{example}

\begin{example}
Матричная бинарная переменная. Ограничения на количество ненулевых элементов в столбцах.

Постановка задачи:
\begin{equation}
\sum_{i=1}^n\sum_{j=1}^nc_{ij}x_{ij}\to \min,
\end{equation}
\begin{equation}
\sum_{i=1}^nx_{ij}=k_j,\quad j=1,\ldots,n.
\end{equation}
 Пусть $k$ -- вектор-столбец с элементами $k_j$.
 В силу утверждения \ref{S2}, система уравнений запишется в виде
 \begin{equation} 
 (E_n\otimes I_{1\times n})\bar x=k
 \end{equation}
 и
 \begin{multline}
 \|E_n\otimes I_{1\times n}\bar x-k\|^2=\bar x^T(E_n\otimes I_{n \times n})\bar x - 2((E_n\otimes I_{n\times 1})k)^T\bar x+\|k\|^2=\ldots \\
 =\bar x^T(E_n\otimes I_{n \times n})\bar x - 2(k\otimes I_{n\times 1})^T\bar x+\|k\|^2.
 \end{multline}
 Следовательно, QUBO постановка задачи, без учета константы, имеет вид
 \begin{equation} 
 \frac{\rho}{2} \bar x^T(E_n\otimes I_{n \times n})\bar x +\left (\bar c -\rho (k\otimes I_{n\times 1})\right )^T\bar x \to \min,
 \end{equation}
 где $\rho >0$ -- штрафной коэффициент.
\end{example}

\begin{example}
Включение бинарных неравенств в целевую функцию.

Постановка задачи:
\begin{equation}
\frac12 x^TQx+v^Tx\to \min,
\end{equation}
\begin{equation}
m_j\leq \sum_{i=1}^nc_{ij}x_i\leq M_j,\quad m_j<M_j,\quad j=1,\ldots,s,
\end{equation}
где $m_j$ и $M_j$ -- целые числа, $c_{ij}\in \{0,1\}$.

Запишем двойные неравенства в виде равенств
$\sum_{i=1}^nc_{ij}x_i-y_j=0$, где $y_j$ -- целочисленная переменная со значениями из отрезка $[m_j,M_j]$.
Количество бинарных переменных для представления вспомогательной переменной $y_j$  равно $p_j=\left \lceil \log_2(M_j-m_j+1)\right \rceil$.
Вектора-строки бинаризации имеют вид $P_j=(1,2,\ldots,2^{p_j-2},M_j-m_j-2^{p_j-1}+1)$. 
Отображение бинаризации запишется в виде 
$y=L\bar z +m$, 
где $z$ -- вектор-столбец бинарных переменных длины $p=\sum_{j=1}^sp_j$, 
$m$ -- вектор-столбец нижних границ $(m_1,\ldots,m_s)^T$ и матрица $L$ имеет вид
\begin{equation}
L=
\begin{pmatrix}
P_1     & 0      &\ldots & 0      \\
0       & P_2    &\ldots & 0      \\
\vdots  & \vdots &\ddots & \vdots \\
0       & 0      &\ldots & P_s
\end{pmatrix}.
\end{equation}
Система исходных бинарных неравенств может быть включена в целевую функцию в виде слагаемого вида
\begin{equation}
\frac{\rho}{2}\|Cx-Lz-m\|^2,
\end{equation}
где $\rho >0$ -- штрафной коэффициент.

Пусть вектор-столбец переменных $w=[x;z]$. Тогда, по утверждению \ref{S6}, QUBO форма исходной задачи имеет вид
    \begin{equation}
    \frac12 w^TSw+R^Tw+\frac{\rho}{2} \|m\|^2\to \min,
    \end{equation}
    где
    \[S=\begin{pmatrix}
        Q+\rho C^TC&-\rho C^TL\\
        -\rho L^TC&\rho L^TL
    \end{pmatrix}
    \quad \textrm{и} \quad
    R=\begin{pmatrix}
        v-\rho C^Tm\\
        \rho L^Tm 
    \end{pmatrix}.\]

\end{example}

\section{Примеры приведения задач к форме QUBO}
В данном разделе рассмотрено приведение к форме QUBO нескольких известных задач бинарной оптимизации.

\subsection{Задача о максимальном разрезе (The Max-Cut Problem)}
Это одна из самых знаменитых задач комбинаторной оптимизации.
Пусть \(G=(V,E)\) -- неориентированный граф. Каждому ребру $(i,j)\in E$ приписан вес $c_{ij}$. 
Обозначим через $C$ симметричную матрицу весов. Положим $c_{ii}=0$. Требуется разбить множество вершин на два непересекающихся подмножества $V_1$ и $V_2$ так, чтобы сумма весов 
ребер с концами в разных подмножествах вершин графа была максимальной.
Определим бинарные переменные $x_i,i=1,\ldots,n$. Пусть $x_i=1$, если $i\in V_1$, и $x_i=0$ в противном случае. Следовательно, $x_i+x_j-2x_ix_j=0$ тогда и только тогда, когда $i$ и $j$ содержатся в одном и том же подмножестве вершин графа. Если вершины в разных множествах, то $x_i+x_j-2x_ix_j=1$.   Получаем следующую постановку задачи
\begin{equation}\label{eq7}
\sum_{(i,j)\in E}c_{ij}(x_i+x_j-2x_ix_j)=\frac12 \sum_{i,j=1}^nc_{ij}(x_i+x_j-2x_ix_j) \to \max.
\end{equation}
Переходя к задаче на минимум, получаем постановку задачи о максимальном разрезе в форме QUBO
\begin{equation}
\frac12 x^TQx+v^Tx\to \min,
\end{equation}
где $Q=2C$ и $v$ -- вектор-столбец с координатами $v_i=-\sum_{j=1}^nc_{ij}$.
Делая замену $x_i=\frac{s_i+1}{2}$ в (\ref{eq7}), получим постановку задачи о максимальном разрезе в форме задачи Изинга
\begin{equation}
-\frac12 s^TJs\to \min,
\end{equation}
где $J=-\frac12 C$ и $ s_i\in \{-1,1\}$.

Следует отметить, что к задаче о максимальном разрезе взвешенного графа может быть сведена любая задача в форме QUBO
\begin{equation}
\frac12x^TQx+v^Tx\to \min.
\end{equation}
Сначала данная задача преобразуется к задаче в форме Изинга
\begin{equation}
-\frac12s^TJs-h^Ts + const\to \min,
\end{equation}
\begin{equation}
J_{ij}=-\frac14(1-\delta_{ij})q_{ij}, \quad h_i=-\frac12v_i-\frac14\sum_{j=1}^nq_{ij}
\end{equation}
и
\begin{equation}
const=\frac18\sum_{i\ne j}q_{ij}+\frac14\sum_{i=1}^nq_{ii}+\frac12\sum_{i=1}^nv_i.
\end{equation}
Затем, с помощью дополнительной переменной $s_0$ с фиксированным значением $-1$, задача преобразуется к задаче в форме Изинга с нулевой линейной частью
\begin{equation}
-\frac12s'^TJ's'\to \min, \quad s'=[-1;s].
\end{equation}
Действительно, пусть матрица квадратичной формы имеет вид 
\begin{equation}
J'=\begin{pmatrix}
    0&-h^T\\
    -h&J
\end{pmatrix}.
\end{equation}
Легко проверить, что целевые функции задач совпадают
\begin{equation}
-\frac12s'^TJ's'=-\frac12s^TJs-h^Ts.
\end{equation}
Таким образом, задача в форме QUBO свелась к задаче о максимальном разрезе графа с весовой матрицей ребер $C=-2J'$.  

\subsection{Задача о максимальном независимом множестве (MIS)}

Задача формулируется следующим образом. Дан неориентированный граф  \(G=(V,E)\). Требуется найти максимальное по количеству элементов подмножество $S\subseteq V$ попарно несмежных между собой вершин. Пусть $n$ -- количество вершин графа.
Определим бинарные переменные $x_i,i\in V$. Пусть $x_i=1$, если $i\in S$, и $x_i=0$ в противном случае. Следовательно, $\sum_{(i,j)\in E}x_ix_j=0$ тогда и только тогда, когда $S$ независимое подмножество. Получаем следующую постановку задачи
\begin{equation}
-\sum_{i=1}^nx_i+\alpha \sum_{(i,j)\in E}x_ix_j \to \min,
\end{equation}
где $\alpha >0$ -- штрафной коэффициент.

Пусть $A$ -- матрица смежности графа $G$. Получим следующую постановку задачи в форме QUBO
\begin{equation}
\frac12x^TQx+v^Tx\to \min,\quad Q=\alpha A,\quad v=-I_{n\times 1}.
\end{equation}

Отметим, что множество вершин $S$ независимо тогда и только тогда, когда оно является кликой в дополнении графа $G$.  Следовательно, задача о нахождении максимальной клики
(Maximum clique) в форме QUBO имеет вид
\begin{equation}
\frac12x^TQx+v^Tx\to \min,\quad Q=\alpha \bar A,\quad v=-I_{n\times 1},
\end{equation}
где $\bar A$ -- поэлементное логическое отрицание матрицы смежности графа $G$.

\subsection{Мощность объединения подмножеств (The Min-k-Union problem)}
Задача формулируется следующим образом \cite{24}. Пусть задано конечное множество элементов $V$ и задана система $S$ подмножеств 
данного множества $V$. 
Требуется найти подсистему $S^k\subset S$, состоящую из $k$ подмножеств, объединение которых содержит минимальное количество элементов множества $V$.

Формализуем задачу. Пусть элементы множества $V$ натуральные числа  $v=1:n$. 
Занумеруем подмножества из $S$ натуральными числами $p=1:m$. Система подмножеств $S$ однозначно задается
матрицей $A=(a_{pv})$, где $a_{pv}=1$, если $p$-ое подмножество содержит элемент $v$, и $a_{pv}=0$ в противном случае.  

Введем бинарные переменные $x_p, p=1:m$. Переменная $x_p$ равна, единице если подмножество c номером $p$ содержится в искомой подсистеме $S^k$,
и нулю в противном случае.

Введем бинарные переменные $y_v, v=1:n$. Переменная $y_v$ равна единице, если элемент $v$ содержится в объединении множеств искомой подсистемы $S^k$, и
нулю в противном случае.

Так как искомая подсистема $S^k$ содержит ровно $k$ подмножеств, то должно выполняться равенство
\begin{equation}
\left (k-\sum_{p=1}^mx_p\right )^2=0.
\end{equation}
Из определения бинарных переменных следует, что выполняются неравенства $y_v\geq x_p$, если $p$-ое подмножество содержит элемент $v$.
Данная система неравенств равносильна равенству
\begin{equation}
\sum_{v=1}^n\sum_{p=1}^ma_{pv}(1-y_v)x_p=0.
\end{equation}
Получаем следующую QUBO постановку исходной задачи
\begin{equation}
c_1\left (k-\sum_{p=1}^mx_p\right )^2+c_2\sum_{v=1}^n\sum_{p=1}^ma_{pv}(1-y_v)x_p +c_3\sum_{v=1}^ny_v \to \min,
\end{equation}
где $c_1,c_2$ и $c_3$ положительные штрафные коэффициенты. В работе \cite{24} указаны условия на штрафные коэффициенты, при которых QUBO
постановка задачи эквивалентна исходной задаче. В частности, можно положить $c_3=1$ и $c_1=c_2=n+1$.

Объединим переменные в один вектор $w=[x;y]$. Тогда в матричном виде QUBO постановка задачи имеет вид
\begin{equation}
\frac{1}{2}w^TQw+v^Tw+c_1k^2\to \min,
\end{equation}
где
\[Q=
\begin{pmatrix}
2c_1I_{m\times m} &-c_2A\\
-c_2A^T &0_{n\times n}
\end{pmatrix}
\quad \textrm{и} \quad
v=
\begin{pmatrix}
-2c_1kI_{m\times 1}+c_2g\\
c_3I_{n\times 1} 
\end{pmatrix},
\]
где вектор $g$ получен суммированием матрицы $A$ по строкам, т.е. $g_p=\sum_{v=1}^na_{pv}$.

\subsection{Задача квадратичного назначения (Quadratic Assignment)}

Это известная задача комбинаторной оптимизации
с приложениями в самых разных областях.
Даны $n$ объектов и $n$ локаций, матрица потоков $F=\bigl (f_{i_1i_2}\bigr )$ ресурсов между объектами и
матрица расстояний между локациями  $D=\bigl (d_{j_1j_2}\bigr )$.
Требуется найти размещение объектов по локациям, минимизирующее взвешенный поток через систему.
Определим массив бинарных переменных \(x_{ij},i,j=1,\ldots,n\). Пусть \(x_{ij}=1\), если $i$-ый объект размещен в $j$-ой локации, и \(x_{ij}=0\) в противном случае. 

Постановка задачи:
\begin{equation}
\sum_{i_1,j_1,i_2,j_2}f_{i_1i_2}d_{j_1j_2}x_{i_1j_1}x_{i_2j_2}\to \min
\end{equation}
при условии, что
\begin{equation}
\sum_{i}x_{ij}=1, \quad j=1,\ldots ,n
\end{equation}
и
\begin{equation}
\sum_{j}x_{ij}=1, \quad i=1,\ldots ,n.
\end{equation}
 Из утверждений \ref{S1} и \ref{S2} следует, что после векторизации задача принимает вид
 \begin{equation}
 \bar{x}^T(D\otimes F)\bar{x}\to \min
 \end{equation}
 при условии, что
 \begin{equation}
 (E_n\otimes I_{1\times n})\bar{x}=I_{n\times 1}
 \end{equation}
 и
  \begin{equation}
  (I_{1\times n}\otimes E_n)\bar{x}=I_{n\times 1}.
  \end{equation}
  Квадраты норм разности правых и левых частей полученных систем уравнений могут быть вычислены следующим образом
  \begin{multline}\|(E_n\otimes I_{1\times n})\bar{x}-I_{n\times 1}\|^2=\bar{x}^T(E_n\otimes I_{n\times n})\bar{x} - \ldots \\
  -2((E_n\otimes I_{n\times 1})I_{n\times 1})^T\bar{x}+n=
 \bar{x}^T(E_n\otimes I_{n\times n})\bar{x} - 2I_{n^2\times 1}^T\bar{x}+n
  \end{multline}
  и
  \begin{multline}
  \|(I_{1\times n}\otimes E_n )\bar{x}-I_{n\times 1}\|^2=\bar{x}^T(I_{n\times n}\otimes E_n )\bar{x} - \ldots \\
  -2((I_{n\times 1}\otimes E_n)I_{n\times 1})^T\bar{x}+n=
  \bar{x}^T(I_{n\times n}\otimes E_n )\bar{x} - 2I_{n^2\times 1}^T\bar{x}+n.   
  \end{multline}

\subsection{Задача о раскраске вершин графа (Graph Coloring)}

Пусть есть $m$ цветов и требуется раскрасить $n$ вершин
графа \((V,E)\) так, чтобы любые две смежные вершины были раскрашены в разные цвета и количество использованных цветов было минимальным.
Введем бинарные переменные: \(x_{ij}=1\), если $i$-ая вершина раскрашена в $j$-ый цвет, и \(x_{ij}=0\) в противном случае; \(w_j=1\), если $j$-ый цвет использован для раскраски вершин, и \(w_j=0\) в противном случае. 

Тогда постановка задачи имеет вид \cite{9}
\begin{equation}
\sum_{j=1}^mw_j\to \min
\end{equation}
при условии, что
\begin{enumerate}
    \item Каждая вершина окрашена ровно в один цвет 
    \begin{equation}
    \sum_{i=1}^n \Bigl(\sum_{j=1}^mx_{ij} -1\Bigr)^2=0
    \end{equation}
    \item Вершины, связываемые ребром, должны быть разных цветов 
    \begin{equation}
    x_{i_1j}+x_{i_2j}\leq w_j,\quad (i_1,i_2)\in E, \enskip j=1,...,m.
    \end{equation}
\end{enumerate}
Рассмотрим первое условие. Из утверждений \ref{S1} и \ref{S2} следует, что
\begin{multline}
 \sum_{i=1}^n \Bigl(\sum_{j=1}^mx_{ij} -1\Bigr)^2=\sum_{i,j_1,j_2}x_{ij_1}x_{ij_2}-2\sum_{i,j}x_{ij}+n= \ldots \\
=\sum_{i_1,j_1,i_2,j_2}\delta_{i_1i_2}x_{i_1j_1}x_{i_2j_2}-2\sum_{i,j}x_{ij}+n= \ldots \\
 =\bar{x}^T(I_{m\times m}\otimes E_{n})\bar{x} -2I_{nm\times 1}^T\bar{x}+n=0.
\end{multline}
Рассмотрим второе условие. Обозначим через \(A\) бинарную матрицу смежности вершин графа, определяемую множеством ребер. Из утверждения \ref{S5} следует, что данное условие равносильно одновременному выполнению двух равенств
\begin{equation}
\sum_{i_1,i_2,j}a_{i_1i_2}x_{i_1j}x_{i_2j}=0 \quad \textrm{и}\quad \sum_{i_1,i_2,j}a_{i_1i_2}(1-w_j)(x_{i_1j}+x_{i_2j})=0.
\end{equation}
В силу утверждения \ref{S1}, первое равенство можно записать в виде
\begin{equation}
\sum_{i_1,i_2,j}a_{i_1i_2}x_{i_1j}x_{i_2j}=
\sum_{i_1,j_1,i_2,j_2}a_{i_1i_2}\delta_{j_1j_2}x_{i_1j_1}x_{i_2j_2}=
\bar{x}^T(E_m\otimes A)\bar{x}=0.
\end{equation}
Заметим, что
\begin{multline}
 \sum_{i_1,i_2,j}a_{i_1i_2}(1-w_j)(x_{i_1j}+x_{i_2j})=\ldots \\
=\sum_{j}(1-w_j)\sum_{i_1,i_2}a_{i_1i_2}(x_{i_1j}+x_{i_2j})=2\sum_{j}(1-w_j)\sum_id_ix_{ij},   
\end{multline}
где $d_i$ -- степень $i$-ой вершины графа.
Так как каждая вершина окрашена ровно в один цвет, то
\begin{equation} 
\sum_{j}(1-w_j)\sum_id_ix_{ij}=-2\sum_{i,j}d_iw_jx_{ij}+2\sum_{i}d_i.
\end{equation}
В силу утверждения \ref{S2} и примера \ref{ex22},
\begin{equation}
2\sum_{i,j}d_iw_jx_{ij}=2\sum_jw_j\sum_id_ix_{ij}=2w^T((E_m\otimes d)\bar x),
\end{equation}
где $d$ -- вектор-строка степеней вершин графа.
Пусть $z=[w;\bar x]$ -- вектор-столбец всех переменных задачи. Добавляя ограничения в виде квадратичных штрафов в целевую функцию и учитывая, что $w_j=w_j^2,\bar x_{k}=\bar x_{k}^2$, получим следующую постановку исходной задачи в форме QUBO
\begin{equation}
\frac12z^TQz\to \min,
\end{equation}
где
$$Q=\begin{pmatrix}
    \alpha_0E_m&-\alpha_2(E_m\otimes d)\\
    -\alpha_2(E_m\otimes d^T)&\alpha_1((I_{m\times m}-2E_m)\otimes E_{n})+\alpha_1(E_m\otimes A)
\end{pmatrix}$$
и $\alpha_0,\alpha_1,\alpha_2>0 $ -- штрафные коэффициенты.

\subsection{Задача о рюкзаках (Multiple Knapsack Problem)}

Пусть имеется $n$ контейнеров с номерами $i=1,\ldots,n$, с заданной вместимостью $c_i$, и $m$ предметов с номерами $j=1,\ldots,m$. Каждый предмет $j$ имеет стоимость $f_{ij}$ и вес $w_{ij}$, зависящие от того, в какой контейнер помещен данный предмет. 
Обычно предполагается, что вместимости контейнеров, стоимость и вес предметов имеют целые положительные значения. Требуется заполнить контейнеры предметами с максимальной суммарной стоимостью. 

Введем бинарные переменные $x_{ij}$. Переменная $x_{ij}=1$, если $j$-ый предмет помещен в $i$-ый контейнер, и $x_{ij}=0$ в противном случае.
Получаем следующую постановку задачи:
\begin{equation}
-\sum_{i,j}f_{ij}x_{ij}\to \min
\end{equation}
при условии, что
\begin{equation}
\sum_{i=1}^nx_{ij}\leq 1,\quad j=1,\ldots,m,
\end{equation}
\begin{equation}
\sum_{j=1}^mw_{ij}x_{ij}\leq c_i, \quad i=1,\ldots,n.
\end{equation}
Заметим, что квадратичный штраф для первой системы неравенств, как мы отмечали в параграфе \ref{p43}, может быть выписан без введения вспомогательных переменных и, по утверждению \ref{S1}, имеет вид
\begin{equation}
\frac{\alpha}{2}\sum_{j=1}^m\sum_{i_1\ne i_2}x_{i_1j}x_{i_2j}=\frac{\alpha}{2}\sum_{j_1,j_2}\sum_{i_1\ne i_2}\delta_{j_1j_2}x_{i_1j_1}x_{i_2j_2}=\frac{\alpha}{2}\bar x^T(E_m\otimes (I_{n\times n}-E_n))\bar x,
\end{equation}
где $\alpha >0$ -- штрафной коэффициент.

Обозначим через $D_{\bar w}$ диагональную матрицу с элементами вектора $\bar w$ на диагонали. Пусть $c$ -- вектор-столбец вместимостей контейнеров.
По утверждению \ref{S2}, после упорядочивания переменных, задача принимает вид
\begin{equation}
\frac{\alpha}{2}\bar x^T(E_m\otimes (I_{n\times n}-E_n))\bar x-\bar f^T\bar x\to \min
\end{equation}
при условии, что
\begin{equation}
(I_{1\times m}\otimes E_n)D_{\bar w}\bar x\leq c.
\end{equation}
Преобразуем систему неравенств в систему уравнений с помощью неотрицательного целочисленного вектора-столбца $z$ размера $n$
\begin{equation}
(I_{1\times m}\otimes E_n)D_{\bar w}\bar x+z=c.
\end{equation}
Бинаризируем вектор $z$. Заметим, что $z_i\in [0,c_i]$. Положим $p_i=\lceil\log_2(c_i+1)\rceil$.  Отображение бинаризации имеет вид $z=Ly$, где $y$ -- бинарный вектор-столбец размера $p=\sum_{i}p_i$,
\begin{equation}
L=
\begin{pmatrix}
P_1     & 0      &\ldots & 0      \\
0       & P_2    &\ldots & 0      \\
\vdots  & \vdots &\ddots & \vdots \\
0       & 0      &\ldots & P_n
\end{pmatrix}
\end{equation}
и $P_i=(1,2,\ldots,2^{p_i-2},c_i-2^{p_i-1}+1)$.

Пусть $u=(\bar x;y)$ -- вектор-столбец основных и вспомогательных переменных. Квадратичный штраф для системы неравенств, в силу утверждения \ref{S6}, имеет вид
\begin{equation}
\frac{\beta}{2}\|(I_{1\times m}\otimes E_n)D_{\bar w}\bar x+Ly-c\|^2=\frac{\beta}{2}u^TQ_cu+\beta v_c^Tu+\frac{\beta}{2}\|c\|^2,
\end{equation}
где
$$ Q_c=
\begin{pmatrix}
D_{\bar w}(I_{m\times m}\otimes E_n)D_{\bar w}     & D_{\bar w}(I_{m\times 1}\otimes L)\\
(I_{1\times m}\otimes L^T)D_{\bar w}       & L^TL
\end{pmatrix},
$$
$$
v_c=
\begin{pmatrix}
-D_{\bar w}(I_{m\times 1}\otimes c)\\
-L^Tc
\end{pmatrix}
\quad \textrm{и} \quad \beta >0.
$$
Добавляя в целевую функцию квадратичный штраф в виде слагаемого без учета константы $\frac{\beta}{2}\|c\|^2$, получим постановку задачи в форме QUBO
\begin{equation}
\frac12 u^TQu+v^Tu\to \min,
\end{equation}
где
$$Q=
\begin{pmatrix}
\alpha (E_m\otimes (I_{n\times n}-E_n))+\beta D_{\bar w}(I_{m\times m}\otimes E_n)D_{\bar w} 
&\beta D_{\bar w}(I_{m\times 1}\otimes L)\\
\beta (I_{1\times m}\otimes L^T)D_{\bar w}       & \beta L^TL
\end{pmatrix}
$$
и
$$v=
\begin{pmatrix}
-\bar f - \beta D_{\bar w}(I_{m\times 1}\otimes c)\\
\beta L^Tc
\end{pmatrix}.
$$

\subsection{Транспортная задача (Vehicle Routing Problem)}

Пусть задан набор из $n+1$ транспортных узлов $i=1,\ldots,n+1$ и множество связей между ними $E$.
Первые $n$ узлов это кусты, на каждом из которых нужно провести бурение. 
Бурение проводится с использованием набора $m$ буровых установок $v=1,\ldots,m$.
Каждая буровая установка проходит последовательность узлов длиной $P \leq n + 2$.
Депо -- это $(n+1)$-ый узел, в котором начинаются и заканчиваются все перемещения установок. Для того, чтобы буровая установка могла оставаться в депо после прибытия, множество связей между узлами должно содержать пару $(n+1,n+1)$.
Каждой паре смежных узлов $(i,j)\in E $ сопоставляется стоимость переезда $c_{ij}>0$.
Целью ставится минимизация суммарной стоимости всех переездов буровых установок.

Данная задача допускает различные математические формулировки. 
Ниже приводится формулировка задачи на основе последовательностей (Sequence-based formulation \cite{23}).

В Sequence-based формулировке вводятся бинарные переменные $x_{vpi}$, которые принимают значение 1, если буровая установка $v$ посещает узел
$i$ с позицией $p$ в последовательности узлов в своем маршруте, в противном случае $x_{vpi} = 0$. Целевая функция, требующая минимизации, имеет вид
\begin{equation}
\sum_{v=1}^m\sum_{p=1}^{P-1}\sum_{(i,j)\in E}c_{ij}x_{vpi}x_{v,p+1,j} \to \min
\end{equation}
при условии, что:
\begin{enumerate}
\item
Каждый узел, кроме депо, будет пройден ровно один раз
\begin{equation}
\sum_{v=1}^m\sum_{p=1}^Px_{vpi}=1,\quad i=1,\ldots,n,
\end{equation}
\item
Номер позиции в последовательности узлов однозначно определяет узел
\begin{equation}
\sum_{i=1}^{n+1}x_{vpi}=1,\quad v=1,\ldots,m,\enskip p=1,\ldots,P,
\end{equation}
\item
Последовательные переходы буровых установок возможны только между смежными узлами
\begin{equation}
\sum_{v=1}^m\sum_{p=1}^{P-1}\sum_{(i,j)\notin E}x_{vpi}x_{v,p+1,j}=0,
\end{equation}
\item
Каждая установка бурения после прибытия в депо там и остается
\begin{equation}
\sum_{v=1}^m\sum_{p=2}^{P-1}\sum_{j=1}^nx_{v,p,n+1}x_{v,p+1,j}=0,
\end{equation}
\item
Последовательности узлов начинаются и заканчиваются в депо
\begin{equation}
\sum_{v=1}^m(1-x_{v,1,n+1}x_{v,P,n+1})=0,
\end{equation}
\item Равенство нулю переменных (следствие из условий 2 и 5)
\begin{equation}
\sum_{v=1}^m\sum_{i=1}^n(x_{v1i}+x_{vPi})=0,
\end{equation}
\begin{equation}
\sum_{v=1}^m\sum_{(n+1,j)\notin E}(x_{v2j}+x_{v,P-1,j})=0.
\end{equation}

\end{enumerate}
Если требуется чтобы были задействованы все буровые установки, то нужно добавить условие
равенства нулю суммы
\begin{equation}
\sum_{v=1}^mx_{v,2,n+1}=0.
\end{equation}

Проведем векторизацию задачи. 
Обозначим через $D$ матрицу размера $P\times P$ с элементами $d_{kl}=0.5$ при $|k-l|=1$ и нулю в противном случае.
В силу утверждений \ref{S1} и \ref{S2}, получим следующую постановку задачи
\begin{equation}
\frac12 \bar x^T(C\otimes D\otimes E_m)\bar x \to \min
\end{equation}
при условии, что
\begin{enumerate}
  \item Пусть матрица $M$ получена из $E_{n+1}$ вычеркиванием последней строки. Тогда система уравнений запишется в виде
    \begin{equation}
    (M\otimes I_{1\times P}\otimes I_{1\times m})\bar x=I_{n\times 1},
    \end{equation}
  а соответствующий квадратичный штраф равен
  \small
  \begin{multline} \frac{\alpha_1}{2}\|(M\otimes I_{1\times P}\otimes I_{1\times m})\bar x - I_{n\times 1}\|^2=\ldots \\
  =\frac{\alpha_1}{2}\bar x^T  (M^TM\otimes I_{P\times P} \otimes I_{m\times m})\bar x - \alpha_1\Bigl ( (M^T\otimes I_{P\times 1}\otimes I_{m\times 1})I_{n\times 1}\Bigr )^T\bar x +\ldots \\+\frac{\alpha_1}{2}n =\frac{\alpha_1}{2}\bar x^T(E'_{n+1}\otimes I_{mP\times mP} )\bar x - \alpha_1 ( I'_{(n+1)\times 1}\otimes I_{mP\times 1} )^T\bar x +\frac{\alpha_1}{2}n,
  \end{multline}
  \normalsize
  где матрица $E'_{n+1}$ получена из матрицы $E_{n+1}$ обнулением элемента $(n+1,n+1)$, а вектор $I'_{(n+1)\times 1}$ получен из вектора $I_{(n+1)\times 1}$ обнулением элемента $(n+1,1)$.
  \item Вторая система уравнений запишется в виде 
  \begin{equation}
  (I_{1\times (n+1)}\otimes E_P \otimes E_m)\bar x=I_{mP\times 1},
  \end{equation}
  а соответствующий квадратичный штраф равен 
  \small
  \begin{multline} \frac{\alpha_2}{2}\|(I_{1\times (n+1)}\otimes E_P \otimes E_m)\bar x-I_{mP\times 1}\|^2=\ldots \\
  =\frac{\alpha_2}{2}\bar x^T  (I_{(n+1)\times (n+1)}\otimes E_{P} \otimes E_{m})\bar x - \alpha_2
  \Bigl ( (I_{(n+1)\times 1}\otimes E_{P}\otimes E_{m})I_{mP\times 1}\Bigr )^T\bar x +\ldots \\+\frac{\alpha_2}{2}mP =\frac{\alpha_2}{2}\bar x^T(I_{(n+1)\times (n+1)}\otimes E_{mP} )\bar x - \alpha_2  I_{(n+1)mP\times 1}^T\bar x +\frac{\alpha_2}{2}mP.
  \end{multline}
  \normalsize
  \item Данный квадратичный штраф запишется в виде
  \begin{equation}
  \frac{\alpha_3}{2}\sum_{v=1}^m\sum_{p=1}^{P-1}\sum_{(i,j)\notin E}x_{vpi}x_{v,p+1,j}=\frac{\alpha_3}{2}\bar x^T(\bar A\otimes D\otimes E_m)\bar x,
  \end{equation}
  где $\bar A$ -- логическое отрицание матрицы смежности узлов.
  \item Определим множество пар индексов 
  \small
  $$W=\{(k,l):k=nP+p,\quad l=(j-1)P+p+1,\quad p=2,\ldots,P-1, \quad j=1,\ldots,n\}.$$
  \normalsize
  Пусть $G$ -- матрица размера $(n+1)P\times (n+1)P$. Положим  $g_{kl}$ и $g_{lk}$ равными $0.5$ при $(k,l)\in W$ и нулю в противном случае. Тогда квадратичный штраф запишется в виде
   \begin{equation}
   \frac{\alpha_4}{2}\sum_{v=1}^m\sum_{p=2}^{P-1}\sum_{j=1}^nx_{v,p,n+1}x_{v,p+1,j}=\frac{\alpha_4}{2}\bar x^T(G\otimes E_m)\bar x.
   \end{equation}
  \item Пусть элементы $(1,P)$ и $(P,1)$ матрицы $H$ размера $P\times P$ равны $0.5$, а остальные элементы данной матрицы равны нулю. Пусть $E''_{n+1}$ -- матрица размера $(n+1)\times (n+1)$ у которой элемент $(n+1,n+1)$ равен 1, а остальные элементы равны нулю. Тогда, квадратичный штраф запишется в виде
   \begin{equation}
   \frac{\alpha_5}{2}\sum_{v=1}^m(1-x_{v,1,n+1}x_{v,P,n+1})=-\frac{\alpha_5}{2}\bar x^T(E''_{n+1}\otimes H\otimes E_m)\bar x+\frac{\alpha_5}{2}m.
   \end{equation}
   \item Равенство нулю переменных можно учесть следующим образом. В QUBO матрице вычеркнуть строки и столбцы с соответствующими номерами. После решения задачи, в найденное решение вставить недостающие нули. Сформируем индекс нулевых элементов вектора $\bar x$.

   Рассмотрим первую группу нулевых переменных
   \begin{equation}
   \sum_{v=1}^m\sum_{i=1}^n(x_{v1i}+x_{vPi})=0.
   \end{equation}
   Пусть $e^k$ -- вектор-строка размера $P$, все элементы которого равны нулю, кроме $k$-ого элемента, равного единице. Пусть вектор $I'_{1\times (n+1)}$
   получен из вектора $I_{1\times (n+1)}$ обнулением $(n+1)$-го элемента. Тогда, индекс первой группы нулевых переменных равен
   $$ind_1=I'_{1\times (n+1)}\otimes (e^1+e^P)\otimes I_{1\times m}.$$
   Рассмотрим вторую группу нулевых переменных
   \begin{equation}
   \sum_{v=1}^m\sum_{(n+1,j)\notin E}(x_{v2j}+x_{v,P-1,j})=0.
   \end{equation}
   Пусть вектор $a$ -- $(n+1)$-ая строка матрицы смежности узлов $A$.
   Тогда, индекс нулевых переменных равен
   \begin{equation}
   ind_2=\bar a\otimes (e^2+e^{P-1})\otimes I_{1\times m},
   \end{equation}
   где $\bar a$ -- логическое отрицание вектора $a$.
       
   Индекс всех нулевых элементов вектора $\bar x$ равен $ind=ind_1\vee ind_2$.
   Если требуется чтобы все буровые установки были задействованы, то
   $ind=ind_1\vee ind_2\vee ind_3$, где
   $$ind_3=e^{n+1}\otimes e^2\otimes I_{1\times m}.$$

\end{enumerate}
Получаем следующую постановку транспортной задачи в форме QUBO
\begin{equation}
\frac12\bar xQ\bar x + v^T\bar x +const \to \min,
\end{equation}
где
\begingroup
\addtolength{\jot}{1em}
\begin{align*}
Q= & C\otimes D\otimes E_m +\alpha_1 E'_{n+1}\otimes I_{mP\times mP} + \alpha_2 I_{(n+1)\times (n+1)}\otimes E_{mP} +\ldots \\
 & + \alpha_3 \bar A\otimes D\otimes E_m + \alpha_4 G\otimes E_m - \alpha_5 E''_{n+1}\otimes H\otimes E_m, \\
\vspace{12pt}
v = &- \alpha_1 ( I'_{(n+1)\times 1}\otimes I_{mP\times 1}) - \alpha_2  I_{(n+1)mP\times 1},\\
const& = \frac{\alpha_1}{2}n + \frac{\alpha_2}{2}mP + \frac{\alpha_5}{2}m.
\end{align*}
\endgroup
При дополнительном условии равенства нулю части переменных $\bar x(ind)=0$, где
$$
 ind=ind_1\vee ind_2\vee ind_3.
$$

Для иллюстрации работы алгоритма на рисунках (\ref{fig1}) -- (\ref{fig3}) представлены найденные алгоритмом решения двух синтетических и одной реальной задачи. 
\begin{figure}[!htp]
\begin{center}
\includegraphics[scale=0.4]{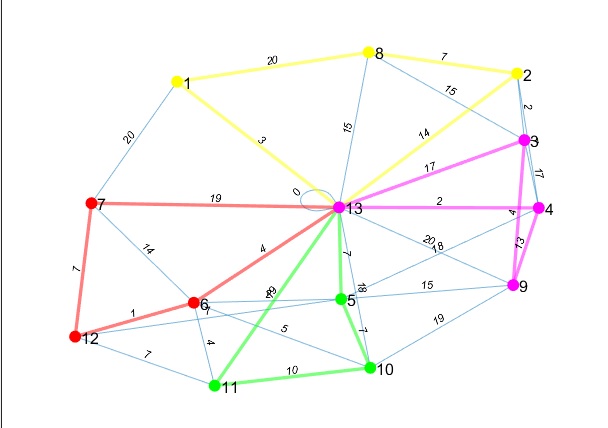}\caption{12 кустов и 4 буровые установки}\label{fig1}
\end{center}
\end{figure}
\begin{figure}[!htp]
\begin{center}
\includegraphics[scale=0.4]{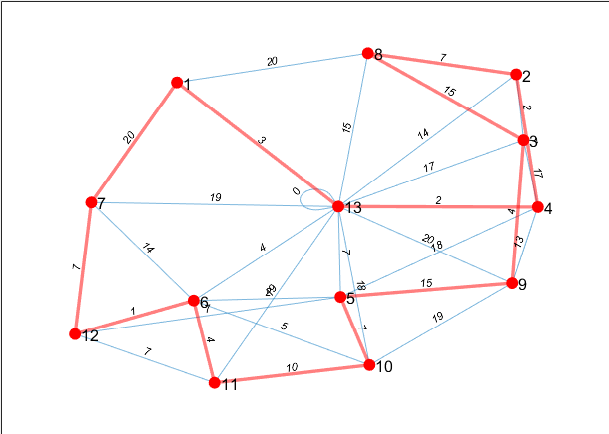}\caption{12 кустов и 1 буровая установка}\label{fig2}
\end{center}
\end{figure}
\begin{figure}[!htp]
\begin{center}
\includegraphics[scale=0.4]{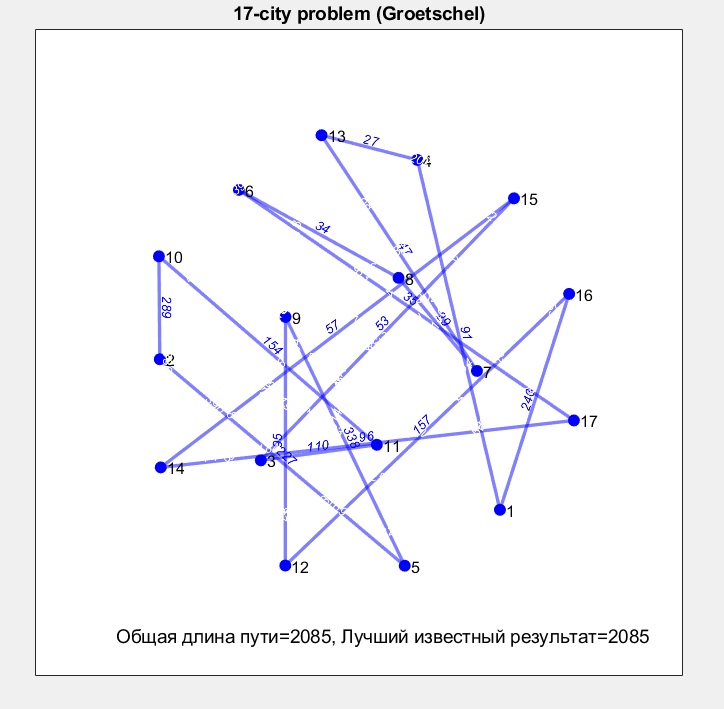}\caption{17-city problem (Groetschel)}\label{fig3}
\end{center}
\end{figure}

\subsection{Реферирование текста (McDonald Model Summarization)}

Пусть, предварительно обработанный текст, состоит из $n$ предложений. Длина каждого предложения $c_i$.
Коэффициент связи каждого предложения с текстом в целом, вычисленный по заданному алгоритму, равен $r_i$.
Коэффициент связи между различными предложениями, вычисленный по заданному алгоритму, равен $s_{ij}, i\ne j$ и $s_{ii}=0$.
Обозначим через $C$ вектор-строку с элементами $c_i$, через $R$ -- вектор-строку с элементами $r_i$ и через $S$ -- симметричную матрицу с элементами $s_{ij}$.
Пусть $K$ -- заданная максимальная длина сжатого текста. Определим бинарные переменные $x_i$ следующим образом:
если сжатый текст содержит $i$-ое предложение, то $x_i=1$, в противном случае $x_i=0$.

Постановка задачи:
\begin{equation}
-\sum_{i=1}^nr_ix_i+\frac{\alpha}{2}\sum_{i\ne j}s_{ij}x_ix_j \to \min, \alpha >0
\end{equation}
при условии, что
\begin{equation}
\sum_{i=1}^nc_ix_i\leq K.
\end{equation}
Для перевода неравенства в равенство, вычислим количество вспомогательных бинарных переменных. Напомним, что $p=\left \lceil\log_2(K+1)\right \rceil$. Вектор-строка бинаризации имеет вид 
$L=(1,2,\ldots,2^{p-2},K-2^{p-1}+1)$. Обозначим через $z$ вектор-столбец исходных и вспомогательных переменных $(x;y)$. 
По утверждению \ref{S6}, получаем следующую QUBO постановку задачи без учета константы 
\begin{equation}
\frac12 z^TQz+v^Tz\to \min,
\end{equation}
\begin{equation}
Q=\begin{pmatrix}
\alpha S+\rho C^TC     & \rho C^TL \\
\rho L^TC     & \rho L^TL 
\end{pmatrix},
\quad v=
\begin{pmatrix}
-R^T -\rho KC^T \\
-\rho KL^T  
\end{pmatrix},
\end{equation}
где $\rho >0$ -- штрафной коэффициент. 

\addcontentsline{toc}{section}{Заключение}
\section*{Заключение}
В работе получены полезные формулы для приведения исходной задачи к форме QUBO. 
Данные формулы существенно упрощают программную реализацию преобразования задачи к форме QUBO. 
В частности, позволяют ускорить формирование матрицы квадратичной формы. 
При размерностях задачи более $10000$ скорость поэлементного формирования матрицы на порядок ниже, чем скорость формирования с помощью произведения Кронекера.
В работе не рассматривались линейные ограничения с вещественными коэффициентами, и этот случай может стать предметом дальнейших исследований.

\addcontentsline{toc}{section}{\bibname}

\end{widetext}
\end{document}